\algnewcommand{\LineComment}[1]{\Statex \hskip\ALG@thistlm {\color{gray}\texttt{// #1}}}
\title{Multigraph edge-coloring with local list sizes}
\date{}
\author{\lsstyle Abhishek~Dhawan}
\email{abhishek.dhawan@math.gatech.edu}
\address{\textls{\normalfont{}School of Mathematics, Georgia Institute of Technology, Atlanta, GA, USA}}
\newtheoremstyle{bfnote}%
{}{}%
{\slshape}{}%
{\bfseries}{\bfseries.}%
{ }%
{\thmname{#1}\thmnumber{ #2}\thmnote{ \ep{\normalfont{}#3}}}
\theoremstyle{bfnote}
\newtheorem{theo}[equation]{Theorem}
\newtheorem*{theo*}{Theorem}
\newtheorem{prop}[equation]{Proposition}
\newtheorem{Lemma}[equation]{Lemma}
\newtheorem{claim}{Claim}[equation]
\newtheorem{corl}[equation]{Corollary}
\newtheorem*{corl*}{Corollary}
\newcounter{ForClaims}[section]
\theoremstyle{definition}
\newtheorem{defn}[equation]{Definition}
\newtheorem*{defn*}{Definition}
\newtheorem*{exmp*}{Example}
\theoremstyle{remark}
\newtheorem*{ques*}{Question}
\newtheorem*{remk*}{Remark}
\newcommand*{\myproofname}{Proof}
\newenvironment{claimproof}[1][\myproofname]{\begin{proof}[#1]}{\end{proof}}
\newcommand{\0}{\emptyset}
\newcommand{\set}[1]{\{#1\}}
\newcommand{\N}{{\mathbb{N}}}
\newcommand{\Z}{\mathbb{Z}}
\renewcommand{\epsilon}{\varepsilon}
\newcommand{\eps}{\epsilon}
\renewcommand{\phi}{\varphi}
\renewcommand{\theta}{\vartheta}
\renewcommand{\leq}{\leqslant}
\renewcommand{\geq}{\geqslant}
\newcommand{\defeq}{\coloneqq}
\newcommand{\bemph}[1]{{\normalfont#1}} % define how emphasised brackets should look
\newcommand{\ep}[1]{\bemph{(}#1\bemph{)}} % parentheses
\newcommand{\emphdef}[1]{\textbf{\textit{{#1}}}}
\newcommand{\emphd}[1]{\emphdef{#1}}
\newcommand{\blank}{\mathsf{blank}}
\numberwithin{equation}{section}
\newcommand{\Sha}{\lfloor3\Delta/2\rfloor}
\newcommand{\Shav}[1]{\lfloor3\deg(#1)/2\rfloor}
\newcommand{\Shift}{\mathsf{Shift}}
\newcommand{\vend}{\mathsf{vEnd}}
\newcommand{\Pot}{\Phi}
\newcommand{\poly}{\mathsf{poly}}
\newcommand{\length}{\mathsf{length}}
\newcommand{\End}{\mathsf{End}}
\newcommand{\Start}{\mathsf{Start}}
\newcommand{\Pivot}{\mathsf{Pivot}}
\newcommand{\vstart}{\mathsf{vStart}}
\titleformat{\section}[block]{\bfseries\scshape\filcenter}{\thesection.}{1ex}{}
\titleformat{\subsection}[block]{\bfseries\filcenter}{\thesubsection.}{1ex}{}
\titleformat{\subsubsection}[runin]{\itshape}{\bfseries\upshape\thesubsubsection.}{1ex}{}[.---]
\titlespacing*{\section}{0pt}{*3}{*1}
\titlespacing*{\subsection}{0pt}{*3}{*1}
\titlespacing*{\subsubsection}{0pt}{*1.5}{*0}
\setlist{topsep=3pt,itemsep=3pt}
\begin{document}

\vspace*{0pt}

\maketitle
\begin{abstract}
    Let $G$ be a multigraph and $L\,:\,E(G) \to 2^\N$ be a list assignment for the edges of $G$. Suppose additionally, for every vertex $x$, the edges incident to $x$ have at least $f(x)$ colors in common. We consider a variant of local edge-colorings wherein the color received by an edge $e$ must be contained in $L(e)$. The locality appears in the function $f$, i.e., $f(x)$ is some function of the local structure of $x$ in $G$. Such a notion is a natural generalization of traditional local edge-coloring. Our main results include sufficient conditions on the function $f$ to construct such colorings. As corollaries, we obtain local analogs of Vizing and Shannon's theorems, recovering a recent result of Conley, Greb\'ik, and Pikhurko.
\end{abstract}

\noindent

\section{Introduction}\label{section:intro}

All multigraphs considered in this paper are finite, undirected, and loopless.
Edge-coloring is an important area of graph theory with significant theoretical results as well as extensive applications in computer science and engineering.
For an introduction to the topic, we refer the reader to the books by Bollob{\'{a}}s \cite{Bollobas} and by Stiebitz, Scheide, Toft, and Favrholdt \cite{stiebitz2012graph}.
Before we state our results, we make a few definitions.
For $r \in \N$, we let $[r] \defeq \set{1, \ldots, r}$.
Let $G$ be a multigraph with $n$ vertices and $m$ edges.
For $x,y \in V(G)$ and $e \in E(G)$, we define $E_G(x)$ to be the set of edges incident to $x$, $E_G(x,y)$ to be the set of edges between $x$ and $y$, $N_G(x)$ to be the set of vertices $z$ such that $E_G(x,z) \neq \0$, and $V(e)$ to be the set of endpoints of $e$.
Furthermore, we define the degree of $x$ to be $\deg(x) \defeq |E_G(x)|$, the multiplicity of $\set{x,y}$ to be $\mu(x,y) \defeq |E_G(x, y)|$, and define the maximum degree and multiplicity (denoted $\Delta(G)$ and $\mu(G)$, respectively) as the maximum value attained by these parameters.
Finally, we let $\delta(G)$ denote the minimum degree of a vertex in $G$.

\begin{defn}\label{defn:edge_coloring}
    A \emphd{proper $r$-edge-coloring} of $G$ is a function $\phi\,:\,E(G) \to [r]$ such that $\phi(e) \neq \phi(f)$ whenever $V(e) \cap V(f) \neq \0$ and $e \neq f$.
    The \emphd{chromatic index} of $G$, denoted by $\chi'(G)$, is the minimum $r$ such that $G$ admits a proper $r$-edge-coloring.
\end{defn}

Vizing famously proved $\chi'(G) \leq \Delta(G) + \mu(G)$ \cite{Vizing} (this was also proved independently by Gupta \cite{gupta1966chromatic}).
Prior to that, Shannon proved the upper bound $\Sha$ \cite{Shannon}, which is independent of $\mu$.
These results are based on global parameters ($\Delta$ and $\mu$).
In this paper, we are interested in \textit{local} colorings, i.e., where the set of available colors for an edge $e$ depends on local parameters (such as the degree of its endpoints).
Such a notion of colorings was first introduced by Erd\H{o}s, Rubin, and Taylor in their work on list colorings of the vertices of a graph \cite{erdos1979choosability}.
We are interested in an edge-coloring variant, specifically list edge-coloring.

\begin{defn}[$L$-edge-coloring]\label{defn:local_sha}
    Given a multigraph $G$ and a function $L \,:\,E(G) \to 2^{\N}$ (referred to as a \emphd{list assignment} for the edges of $G$), we say $\phi\,:\,E(G) \to \N$ is a \emphd{proper $L$-edge-coloring} if it is a proper edge-coloring such that for each edge $e \in E(G)$, we have $\phi(e) \in L(e)$.
\end{defn}

The list edge-coloring conjecture is a famous open problem posed independently by a number of researchers in the 1970s and 1980s.
It states that as long as $|L(e)| \geq \chi'(G)$ for each edge $e$, $G$ admits a proper $L$-edge-coloring.
In \cite{borodin1997list}, Borodin, Kostochka, and Woodall proved a local list version of K\H{o}nig's theorem, providing an alternate proof of the list edge-coloring conjecture for bipartite multigraphs.
Specifically, they showed that every bipartite multigraph admits a proper $L$-edge-coloring provided that for each edge $e \in E_G(x, y)$, $|L(e)| \geq \max\set{\deg(x), \deg(y)}$.
They also proved an analogous result for Shannon's bound.
Specifically, there is a proper $L$-edge-coloring for a multigraph $G$ if for each edge $e \in E_G(x, y)$, we have
\[|L(e)| \geq \max\set{\deg(x), \deg(y)} + \lfloor\min\set{\deg(x), \deg(y)}/2\rfloor.\]
Bonamy, Delcourt, Lang, and Postle provided an asymptotic bound in this setting.
They show that for any $\eps > 0$ and $\Delta$  sufficiently large, there is a proper $L$-edge-coloring whenever $|L(e)| \geq (1+\eps)\max\set{\deg(x), \deg(y)}$ for each $e \in E_G(x, y)$, provided $\Delta(G) \leq \Delta$ and $\delta(G) \geq \ln^{25}\Delta$ \cite{bonamy2024edge}.

Improving the results of \cite{bonamy2024edge} further seems a difficult task as it would result in strong advancements toward the list edge-coloring conjecture.
However, there has been success with certain types of lists, namely sets of the form $[r]$.
Recently, Christiansen \cite{Christ} resolved a conjecture posed in \cite{bonamy2024edge} by proving that all simple graphs admit an $L$-edge-coloring for $L(xy) = [1 + \max\set{\deg(x), \deg(y)}]$.
Conley, Greb{\'\i}k, and Pikhurko extended this result to multigraphs by constructing an $L$-edge-coloring for $L(e) = [\max\set{\deg(x) + \mu(x), \deg(y) + \mu(y)}]$, where $V(e) = \set{x, y}$ and $\mu(x) \defeq \max_{z}\mu(x, z)$ \cite{localvizing}.

We introduce a slightly different definition of locality, which generalizes these results.
In particular, given a multigraph $G$ and a list assignment $L$ for the edges of $G$, we define a function on the vertices of $G$ as follows:
\begin{align}\label{eqn:local_list_size}
    f(x, L) \defeq \bigcap_{e \in E_G(x)}L(e), \quad x \in V(G).
\end{align}
We will provide sufficient lower bounds on $|f(x, L)|$ in terms of the local structure of $x$ to guarantee the existence of a proper $L$-edge-coloring.
We note that this is a natural abstraction of local edge-colorings considered in \cite{Christ, localvizing} as the constraint $|f(x, L)| \geq c(x)$ can be achieved by the list assignment $L(e) = [\max\set{c(x), c(y)}]$ (where $V(e) = \set{x, y}$).
Additionally, our proofs are based on the ideas of \cite{localvizing} inspired by that of \cite{Christ}.

We are now ready to state our main result.

\begin{theo}\label{theo:main_theo}
    Let $G$ be an $n$-vertex multigraph of maximum degree $\Delta$ and let $L \,:\,E(G) \to 2^{\N}$ be a list assignment for the edges of $G$.
    Furthermore, let $f(x, L)$ be as defined in \eqref{eqn:local_list_size}.
    \begin{enumerate}
        \item\label{theo:shannon} If $|f(x, L)| \geq \Shav{x}$ for each $x \in V(G)$, then $G$ contains a proper $L$-edge-coloring.

        \item\label{theo:vizing} If $|f(x, L)| \geq \deg(x) + \mu(x)$ for each $x \in V(G)$, then $G$ contains a proper $L$-edge-coloring.

        \item\label{theo:vizing_bipartite} If $G$ is bipartite and $|f(x, L)| \geq \deg(x)$ for each $x \in V(G)$, then $G$ contains a proper $L$-edge-coloring.
    \end{enumerate}
    Moreover, if $|f(x, L)| \leq \poly(\Delta, n)$ for each $x$, there is a $\poly(\Delta, n)$ time algorithm to compute such colorings.
\end{theo}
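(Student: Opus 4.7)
The plan is to prove all three parts by induction on $|E(G)|$, with the empty graph trivially handled. In the inductive step we remove an edge $e_0 = xy$, apply the inductive hypothesis to $G - e_0$ to obtain a proper $L$-edge-coloring $\phi_0$, and extend $\phi_0$ to color $e_0$. The central feature of the local-list setting that makes extension tractable is that any color $\alpha \in f(x,L)$ belongs to $L(e)$ for every edge $e$ incident to $x$; consequently, any recoloring that touches only edges at $x$ and draws its new colors from $f(x,L)$ is automatically list-compliant.

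Parts (1) and (3) in fact reduce to the classical local list results of \cite{borodin1997list}. Since $L(e) \supseteq f(x,L) \cup f(y,L)$ for each edge $e = xy$, the hypothesis $|f(v,L)| \geq c(v)$ forces $|L(e)| \geq \max\{c(x), c(y)\}$. For (3) this gives $|L(e)| \geq \max\{\deg(x), \deg(y)\}$, matching the bipartite local list hypothesis of \cite{borodin1997list}. For (1), assuming without loss of generality $\deg(x) \geq \deg(y)$, we have $\max_{v \in V(e)} \lfloor 3\deg(v)/2 \rfloor = \deg(x) + \lfloor \deg(x)/2 \rfloor \geq \deg(x) + \lfloor \deg(y)/2 \rfloor$, matching the Shannon-type hypothesis of \cite{borodin1997list}. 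Both reductions preserve algorithmic content.

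The substantive content of the theorem is part (2), which extends \cite{localvizing} from interval lists $L(e) = [c]$ to arbitrary lists whose vertex intersections are suitably large. The plan is to adapt the Vizing fan / Tashkinov tree argument of \cite{localvizing} to the local-list setting. After removing $e_0$, the set $f(x,L) \setminus \phi_0(E_{G-e_0}(x))$ has size at least $\mu(x) + 1$. Build a maximal Vizing fan $(y_0 = y, y_1, \ldots, y_k)$ at $x$ with fan colors $\alpha_i \in f(x,L)$ satisfying $\phi_0(xy_{i+1}) = \alpha_i$ and $\alpha_i$ missing at $y_i$. Either some $\alpha_i$ is already missing at $x$, in which case a telescoped sequence of shifts along $xy_{i+1}, \ldots, xy_k$ frees a color for $e_0$; or the fan saturates and we perform a Kempe swap using a color of $f(x,L)$ missing at $x$ together with a fan color, reaching the previous configuration.

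The hard part is verifying that the Kempe swap remains list-compliant along its entire alternating chain, the classical obstruction to list edge-coloring. The plan is to choose both swap colors from $f(x,L)$, which controls edges incident to $x$, and to analyze the chain via the Tashkinov tree framework of \cite{localvizing}: at each intermediate vertex $v$ traversed, the hypothesis $|f(v,L)| \geq \deg(v) + \mu(v)$ provides enough slack that the relevant swap colors lie in the lists of edges incident to $v$. This case analysis is the longest technical portion of the proof. For the algorithmic claim, each extension step builds structures of size $\poly(\Delta, |E(G)|)$ and performs $\poly(\Delta, |E(G)|)$ recolorings, and the lists can be represented in $\poly(\Delta, n)$ space by hypothesis; summing over edges gives the claimed polynomial-time algorithm.
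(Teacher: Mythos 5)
Your reductions for parts~\eqref{theo:vizing_bipartite} and \eqref{theo:shannon} to the local list results of \cite{borodin1997list} are correct: since $f(v,L) \subseteq L(e)$ for every $e \in E_G(v)$, we do have $L(e) \supseteq f(x,L) \cup f(y,L)$, and your numerical comparison $\lfloor 3\max\{\deg(x),\deg(y)\}/2\rfloor \geq \max\{\deg(x),\deg(y)\} + \lfloor\min\{\deg(x),\deg(y)\}/2\rfloor$ is valid. This is genuinely different from the paper, which develops a self-contained potential-function framework (with happy/content chains, and the pair $\Pot(G,\phi) = (A(G,\phi), D(G,\phi))$) that handles all three cases uniformly rather than invoking external theorems; the paper even remarks that its Shannon-type corollary is slightly weaker than the one implied by \cite{borodin1997list}. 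Your reduction is cleaner if one grants the external results, but it forgoes the unified machinery the paper is building.

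For part~\eqref{theo:vizing}, however, there is a genuine gap, and it lies precisely where you flag the difficulty. You propose to prove that the Kempe (alternating-path) swap ``remains list-compliant along its entire alternating chain,'' asserting that $|f(v,L)| \geq \deg(v) + \mu(v)$ at intermediate vertices ``provides enough slack that the relevant swap colors lie in the lists.'' This is false in general: a color $\alpha$ on an edge $vw$ of the chain need not lie in $L(vw')$ for the next edge $vw'$, and no lower bound on $|f(v,L)|$ forces it to. The paper's Lemma~\ref{lemma:path} resolves this in a fundamentally different way: it does not try to guarantee list-compliance along the whole chain, but instead shows that if the shift first violates the list condition at some edge $f = uv$ (so the swap color $\alpha \notin L(f)$, hence $\alpha \notin f(u,L) \cup f(v,L)$), then that color cannot re-enter the available set $A(\cdot,u)$ or $A(\cdot,v)$, which drops the total availability $A(G,\phi)$ by at least one. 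Non-compliance is thus converted into measurable progress rather than avoided. Your sketch has no analogue of this mechanism and no potential function, so it lacks both a way to handle the broken chain and a termination argument (the informal Vizing-fan loop ``returns to the previous configuration'' without any quantity that strictly decreases). You also need to handle the case where the fan revisits a neighbor $y_i = y_j$ (possible in a multigraph) and the two-piece truncated-fan/shifted-fan dichotomy of the paper's Lemma~\ref{lemma:fan_vizing}; none of that appears in your outline. As written, part~\eqref{theo:vizing} is a plan for a proof rather than a proof, and the one concrete technical claim it rests on is not true.
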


We remark that the sets $f(x, L)$ would be included in the input of any such algorithm.
The following proposition allows us to assume $\max_x|f(x, L)| < \infty$ for the setting of Theorem~\ref{theo:main_theo}.

\begin{prop}\label{prop:list_size}
    Let $G$ be an $n$-vertex multigraph of maximum degree $\Delta$ and let $L \,:\,E(G) \to 2^{\N}$ be a list assignment for the edges of $G$.
    Let $c \,:\, V(G) \to \N$ be such that
    \begin{enumerate}
        \item\label{item:finite_c} there is $\ell \in \N$ such that $c(x) \leq \ell$ for each $x \in V(G)$, and
        \item\label{item:list_lb} $|f(x, L)| \geq c(x)$ for each $x \in V(G)$.
    \end{enumerate}
    Then there exists a list assignment for the edges $L'\,:\,E(G) \to 2^\N$ such that
    \begin{enumerate}
        \setcounter{enumi}{2}
        \item\label{item:subset} $L'(e) \subseteq L(e)$ for each $e \in E(G)$,
        \item\label{item:bdd_above} $|f(x, L')| < \infty$ for each $x \in V(G)$, and
        \item\label{item:bdd_below} $|f(x, L')| \geq c(x)$ for each $x \in V(G)$.
    \end{enumerate}
\end{prop}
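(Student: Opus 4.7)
The plan is to build $L'$ by truncating each edge list in such a way that an explicit ``witness'' set of the required size sits inside $f(x, L')$ at every vertex. Concretely, for each $x \in V(G)$, use condition~\eqref{item:list_lb} to pick a subset
\[
S(x) \;\subseteq\; f(x, L), \qquad |S(x)| \;=\; c(x),
\]
which is possible since $|f(x, L)| \geq c(x)$ and is moreover a finite set because $c(x) \leq \ell$ by \eqref{item:finite_c}. Then, for each edge $e \in E(G)$ with $V(e) = \{x, y\}$, set
\[
L'(e) \;\defeq\; S(x) \cup S(y).
\]
This is a finite set of size at most $2\ell$.

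The three required properties follow by direct inspection. For~\eqref{item:subset}: whenever $e$ is incident to $x$, we have $S(x) \subseteq f(x,L) = \bigcap_{e' \in E_G(x)} L(e') \subseteq L(e)$, and symmetrically $S(y) \subseteq L(e)$, hence $L'(e) \subseteq L(e)$. For~\eqref{item:bdd_below}: by construction $S(x) \subseteq L'(e)$ for every $e \in E_G(x)$, so $S(x) \subseteq \bigcap_{e \in E_G(x)} L'(e) = f(x, L')$, giving $|f(x, L')| \geq |S(x)| = c(x)$. For~\eqref{item:bdd_above}: if $x$ has at least one incident edge $e = \{x,y\}$, then $f(x, L') \subseteq L'(e)$ and so $|f(x, L')| \leq 2\ell < \infty$. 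Vertices with no incident edges play no role in an edge-coloring, so they may be excluded at the outset (or one may adopt the convention that an empty intersection is $\emptyset$, under which~\eqref{item:bdd_above} is trivially satisfied).

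There is really no obstacle in this argument; it is a bookkeeping step whose only purpose is to let later proofs (e.g.\ of Theorem~\ref{theo:main_theo}) pretend all lists are finite without loss of generality. The sole subtlety is why the uniform bound $\ell$ is needed in~\eqref{item:finite_c}: it guarantees the witness sets $S(x)$ have finite cardinality, so that the union defining $L'(e)$ is finite. Under the weaker assumption that each individual $c(x)$ is finite, the same construction still produces a valid $L'$, but one would only conclude that $|f(x, L')|$ is finite at each vertex, rather than bounded by a global constant.
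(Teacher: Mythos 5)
Your proof is correct and essentially matches the paper's: both truncate each vertex's common-color set $f(x,L)$ to a finite witness subset and define each edge list as the union of its two endpoints' witnesses, then verify \eqref{item:subset}--\eqref{item:bdd_below} by direct inclusion. Your choice of witness size $|S(x)| = c(x)$ is marginally cleaner than the paper's $\min\{|f(x,L)|,\ell\}$, which forces a short case split in verifying \eqref{item:bdd_below}, but the construction and verification are the same.
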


\begin{proof}
    Define the list assignment $L_v\,:\, V(G) \to 2^\N$ such that $L_v(x) = f(x ,L)$.
    Let $L_v'(x)$ be an arbitrary subset of $L_v(x)$ containing $\min\set{|L_v(x)|, \ell}$ elements.
    In particular, if $|L_v(x)| < \ell$ then $L_v'(x) = L_v(x)$.
    Define $L'\,:\, E(G) \to 2^\N$ such that $L'(e) = L_v'(x) \cup L_v'(y)$ for each $e \in E_G(x, y)$.
    Note that as $L_v'(x) \subseteq f(x, L)$, it follows by \eqref{eqn:local_list_size} that \ref{item:subset} holds.
    Furthermore, we note that for any vertex $x \in V(G)$ and edge $e \in E_G(x)$
    \[f(x, L') \subseteq L'(e) \implies |f(x, L')| \leq 2\,\ell < \infty,\]
    implying \ref{item:bdd_above} holds.
    By construction, we also have $L_v'(x) \subseteq f(x, L')$.
    If $|L_v(x)| \geq \ell$, then we have
    \[|f(x, L')| \geq |L_v'(x)| = \ell \geq c(x),\]
    where the last step follows by \ref{item:finite_c}.
    If $|L_v(x)| < \ell$, then we have
    \[|f(x, L')| \geq |L_v'(x)| = |L_v(x)| \geq c(x),\]
    where the last step follows by \ref{item:list_lb}.
    In either case \ref{item:bdd_below} holds, completing the proof.
\end{proof}

For appropriate $\ell \in \set{\Sha, \Delta+\mu, \Delta}$, we may apply Proposition~\ref{prop:list_size} to assume $\max_x|f(x, L)| < \infty$ in Theorem~\ref{theo:main_theo}\eqref{theo:shannon}--\eqref{theo:vizing_bipartite}.
For the remainder of the paper, we will only consider list assignments satisfying $\max_x|f(x, L)| < \infty$.

Let $e \in E_G(x, y)$.
We remark that by setting $L(e) = [\max\set{\deg(x) + \mu(x), \deg(y) + \mu(y)}]$, we recover the result of \cite{localvizing}.
Furthermore, by setting $L(e) = [\max\set{\deg(x), \deg(y)}]$, we recover the local K\H{o}nig theorem.
Finally, by setting $L(e) = [\max\set{\Shav{x}, \Shav{y}}]$, we prove an analogous local result for Shannon's bound, which is slightly weaker than the one implied by the results in \cite{borodin1997list}.

\begin{corl}[to Theorem~\ref{theo:main_theo}]\label{theo:local_bounds}
    Let $G$ be an $n$-vertex multigraph of maximum degree $\Delta$.
    \begin{enumerate}
        \item There is a proper edge-coloring $\phi\,:\,E(G) \to \N$ such that for each $e \in E(G)$, we have $\phi(e) \in [\max\set{\Shav{x}, \Shav{y}}]$, where $V(e) = \set{x, y}$.

        \item There is a proper edge-coloring $\phi\,:\,E(G) \to \N$ such that for each $e \in E(G)$, we have $\phi(e) \in [\max\set{\deg(x) + \mu(x), \deg(y) + \mu(y)}]$, where $V(e) = \set{x, y}$.

        \item If $G$ is bipartite, there is a proper edge-coloring $\phi\,:\,E(G) \to \N$ such that for each $e \in E(G)$, we have $\phi(e) \in [\max\set{\deg(x), \deg(y)}]$, where $V(e) = \set{x, y}$.
    \end{enumerate}
    Moreover, there is a $\poly(\Delta, n)$ time algorithm to compute such colorings.
\end{corl}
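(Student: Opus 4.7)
The approach is to exhibit, for each of the three parts, a list assignment $L$ whose associated intersection function $f(\cdot, L)$ defined in \eqref{eqn:local_list_size} satisfies the hypothesis of the corresponding part of Theorem~\ref{theo:main_theo}; any proper $L$-edge-coloring produced by that theorem will automatically certify the claim of the corollary. Concretely, for part (1) I would take $L(e) \defeq [\max\set{\Shav{x}, \Shav{y}}]$ for every edge $e$ with $V(e) = \set{x,y}$; for part (2), $L(e) \defeq [\max\set{\deg(x) + \mu(x), \deg(y) + \mu(y)}]$; and for part (3), $L(e) \defeq [\max\set{\deg(x), \deg(y)}]$.

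The verification step is uniform across the three cases. Fix a vertex $x$ and write $c(x)$ for the relevant local parameter ($\Shav{x}$, $\deg(x) + \mu(x)$, or $\deg(x)$, respectively). For every $e \in E_G(x)$ with $V(e) = \set{x, y}$, the list $L(e)$ is the initial segment $[\max\set{c(x), c(y)}]$ of $\N$, which contains $[c(x)]$. Taking the intersection over all $e \in E_G(x)$ as in \eqref{eqn:local_list_size} gives $f(x, L) \supseteq [c(x)]$, hence $|f(x, L)| \geq c(x)$. Applying the matching part of Theorem~\ref{theo:main_theo} then produces the desired proper $L$-edge-coloring of $G$.

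For the algorithmic claim, observe that in all three cases $|f(x, L)| \leq 2\Delta$ for every $x$, which is trivially $\poly(\Delta, n)$, so the polynomial-time conclusion of Theorem~\ref{theo:main_theo} transfers directly. There is essentially no obstacle here: the whole point of the corollary is to record that the earlier formulations of local edge-coloring with palettes of the form $[r]$ on each edge are the special case of the intersection-based locality $f(\cdot, L)$ obtained when the lists $L(e)$ are initial segments, as was already remarked in the paragraph preceding the corollary.
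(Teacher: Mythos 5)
Your proposal is correct and follows exactly the approach the paper indicates: define $L(e)$ as the stated initial segment, observe that $L(e) \supseteq [c(x)]$ for every $e \in E_G(x)$ so that $f(x,L) \supseteq [c(x)]$, and invoke the matching part of Theorem~\ref{theo:main_theo}, with $|f(x,L)| \leq 2\Delta$ handling the algorithmic claim. The paper does not give a formal proof of the corollary, but the paragraph preceding it (and the remark after \eqref{eqn:local_list_size}) sketches precisely this reduction, so you have simply written out the intended argument.
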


The remainder of the paper is structured as follows. 
In \S\ref{section:prelim}, we will introduce the relevant notation and terminology we use as well as prove an important technical lemma which implies Theorem~\ref{theo:main_theo}\eqref{theo:vizing_bipartite}.
In \S\ref{section:shannon}, we will prove Theorem~\ref{theo:main_theo}\eqref{theo:shannon}, and in \S\ref{section:vizing}, we will prove Theorem~\ref{theo:main_theo}\eqref{theo:vizing}.

\section{Notation and Preliminaries}\label{section:prelim}

Throughout the remainder of the paper, $G$ is an $n$-vertex multigraph of maximum degree $\Delta$, $L\,:\, E(G) \to 2^{\N}$ is a list assignment for the edges of $G$, and $f(x, L)$ is as defined in \eqref{eqn:local_list_size}.
Let $\phi\,:\, E(G)\to \N \cup \{\blank\}$ be such that $\phi(e) \in L(e) \cup \set{\blank}$.
We call such a function a \emphd{partial $L$-edge-coloring} of $G$. 
Here, $\phi(e) = \blank$ indicates that the edge $e$ is uncolored.
We let $\mathsf{U_\phi}$ denote the set of uncolored edges under $\phi$.
Given a partial $L$-edge-coloring $\phi$ and $x\in V(G)$, we let
\[U(\phi, x) \defeq \{\phi(e)\,:\, e \in E_G(x)\setminus \mathsf U_\phi\}, \quad A(\phi, x) \defeq f(x, L) \setminus U(\phi, x)\] 
be the set of all the \emphd{used} and \emphd{available} colors at $x$ under the coloring $\phi$, respectively.
We say a color is \emphd{missing} at $x$ if it is not contained in $U(\phi, x)$.
As locality is understood, for the remainder of the paper we will use partial coloring to mean partial $L$-edge-coloring for brevity.

We say an uncolored edge $e \in E_G(x,y)$ is \emphd{$\phi$-happy} if $A(\phi, x)\setminus U(\phi, y) \neq \0$ or $A(\phi, y) \setminus U(\phi, x) \neq \0$.
If $e \in E_G(x, y)$ is $\phi$-happy, we can extend the coloring $\phi$ by assigning any color in $A(\phi, x)\setminus U(\phi, y)$ or $A(\phi, y) \setminus U(\phi, x)$ to $e$ to obtain a new proper partial coloring.

Given a proper partial coloring, we wish to modify it in order to create a partial coloring with a happy edge.
To do so, we will construct so-called \emphd{augmenting chains}.
The remainder of this section is similar to \cite[\S2]{ShannonChain} with a few minor additions due to the introduction of locality.
A \emphd{chain} of length $k$ is a sequence of distinct edges $C = (e_0, \ldots, e_{k-1})$ such that $|V(e_i)\cap V(e_{i+1})| = 1$ for each $0 \leq i < k-1$. 
Let $\Start(C) \defeq e_0$ and $\End(C) \defeq e_{k-1}$ denote the first and the last edges of $C$ respectively and let $\length(C) \defeq k$ be the length of $C$.
To assist with defining how we modify partial colorings, we will define the $\Shift$ operation. 
Given a chain $C = (e_0, \ldots, e_{k-1})$, we define the coloring $\Shift(\phi, C)$ as follows:
\[\Shift(\phi, C)(e) \defeq \left\{\begin{array}{cc}
   \phi(e_{i+1})  & e = e_i, \, 0 \leq i < k-1; \\
    \blank & e = e_{k-1}; \\
    \phi(e) & \text{otherwise.}
\end{array}\right.\]
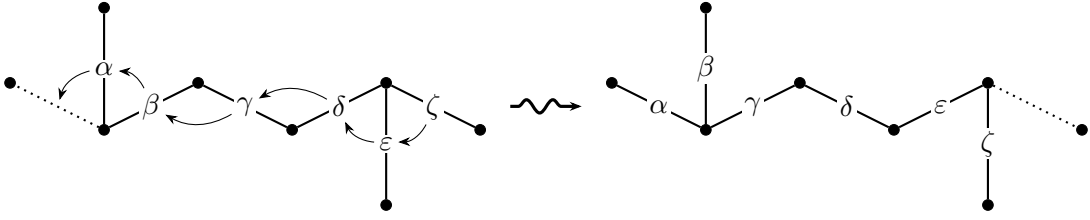
\begin{figure}[t]
	\centering
	\begin{tikzpicture}
	\begin{scope}
	\node[circle,fill=black,draw,inner sep=0pt,minimum size=4pt] (a) at (0,0) {};
	\node[circle,fill=black,draw,inner sep=0pt,minimum size=4pt] (b) at (-1.25,0.625) {};
	\node[circle,fill=black,draw,inner sep=0pt,minimum size=4pt] (c) at (0,1.625) {};
	\node[circle,fill=black,draw,inner sep=0pt,minimum size=4pt] (d) at (1.25,0.625) {};
	\node[circle,fill=black,draw,inner sep=0pt,minimum size=4pt] (e) at (2.5,0) {};
	\node[circle,fill=black,draw,inner sep=0pt,minimum size=4pt] (f) at (3.75,0.625) {};
	\node[circle,fill=black,draw,inner sep=0pt,minimum size=4pt] (g) at (3.75,-1) {};
	\node[circle,fill=black,draw,inner sep=0pt,minimum size=4pt] (h) at (5,0) {};
	
	\draw[thick,dotted] (a) to node[midway,inner sep=0pt,minimum size=4pt] (i) {} (b);
	\draw[thick] (a) to node[midway,inner sep=1pt,outer sep=1pt,minimum size=4pt,fill=white] (j) {$\alpha$} (c);
	\draw[thick] (a) to node[midway,inner sep=1pt,outer sep=1pt,minimum size=4pt,fill=white] (k) {$\beta$} (d);
	\draw[thick] (d) to node[midway,inner sep=1pt,outer sep=1pt,minimum size=4pt,fill=white] (l) {$\gamma$} (e);
	\draw[thick] (f) to node[midway,inner sep=1pt,outer sep=1pt,minimum size=4pt,fill=white] (m) {$\delta$} (e);
	\draw[thick] (f) to node[midway,inner sep=1pt,outer sep=1pt,minimum size=4pt,fill=white] (n) {$\epsilon$} (g);
	\draw[thick] (f) to node[midway,inner sep=1pt,outer sep=1pt,minimum size=4pt,fill=white] (o) {$\zeta$} (h);
	
	\draw[-{Stealth[length=1.6mm]}] (j) to[bend right] (i);
	\draw[-{Stealth[length=1.6mm]}] (k) to[bend right] (j);
	\draw[-{Stealth[length=1.6mm]}] (l) to[bend left] (k);
	\draw[-{Stealth[length=1.6mm]}] (m) to[bend right] (l);
	\draw[-{Stealth[length=1.6mm]}] (n) to[bend left] (m);
	\draw[-{Stealth[length=1.6mm]}] (o) to[bend left] (n);
	\end{scope}
	
	\draw[-{Stealth[length=1.6mm]},very thick,decoration = {snake,pre length=3pt,post length=7pt,},decorate] (5.4,0.3125) -- (6.35,0.3125);
	
	\begin{scope}[xshift=8cm]
	\node[circle,fill=black,draw,inner sep=0pt,minimum size=4pt] (a) at (0,0) {};
	\node[circle,fill=black,draw,inner sep=0pt,minimum size=4pt] (b) at (-1.25,0.625) {};
	\node[circle,fill=black,draw,inner sep=0pt,minimum size=4pt] (c) at (0,1.625) {};
	\node[circle,fill=black,draw,inner sep=0pt,minimum size=4pt] (d) at (1.25,0.625) {};
	\node[circle,fill=black,draw,inner sep=0pt,minimum size=4pt] (e) at (2.5,0) {};
	\node[circle,fill=black,draw,inner sep=0pt,minimum size=4pt] (f) at (3.75,0.625) {};
	\node[circle,fill=black,draw,inner sep=0pt,minimum size=4pt] (g) at (3.75,-1) {};
	\node[circle,fill=black,draw,inner sep=0pt,minimum size=4pt] (h) at (5,0) {};
	
	\draw[thick] (a) to node[midway,inner sep=1pt,outer sep=1pt,minimum size=4pt,fill=white] (i) {$\alpha$} (b);
	\draw[thick] (a) to node[midway,inner sep=1pt,outer sep=1pt,minimum size=4pt,fill=white] (j) {$\beta$} (c);
	\draw[thick] (a) to node[midway,inner sep=1pt,outer sep=1pt,minimum size=4pt,fill=white] (k) {$\gamma$} (d);
	\draw[thick] (d) to node[midway,inner sep=1pt,outer sep=1pt,minimum size=4pt,fill=white] (l) {$\delta$} (e);
	\draw[thick] (f) to node[midway,inner sep=1pt,outer sep=1pt,minimum size=4pt,fill=white] (m) {$\epsilon$} (e);
	\draw[thick] (f) to node[midway,inner sep=1pt,outer sep=1pt,minimum size=4pt,fill=white] (n) {$\zeta$} (g);
	\draw[thick,dotted] (f) to node[midway,inner sep=0pt,minimum size=4pt] (o) {} (h);
	\end{scope}
	\end{tikzpicture}
	\caption{Shifting a coloring along a chain (Greek letters represent colors).}\label{fig:shift}
\end{figure}

\noindent 
In other words, $\Shift(\phi, C)$ ``shifts'' the color from $e_{i+1}$ to $e_i$, leaves $e_{k-1}$ uncolored, and keeps the coloring on the other edges unchanged (see Fig.~\ref{fig:shift} for an example).
We call $C$ a \emphd{$\phi$-shiftable chain} if $\phi(e_0) = \blank$ and the coloring $\Shift(\phi, C)$ is a proper partial coloring.

The next definition captures the class of chains that can be used to create a happy edge:

\begin{defn}[Happy chains]
    We say that a chain $C$ is \emphd{$\phi$-happy} for a partial coloring $\phi$ if it is $\phi$-shiftable and the edge $\End(C)$ is $\Shift(\phi, C)$-happy.
\end{defn}

It may not always be possible to find a $\phi$-happy chain.
In the case that we cannot find a $\phi$-happy chain, we wish to modify our coloring so that we may be able to find one in the new coloring.
To this end, we define a \emphd{potential function} of a proper partial coloring $\phi$ (this approach was introduced in \cite{Christ} and further developed in \cite{localvizing}):
\begin{align*}
    A(G, \phi) &\defeq \sum_{x \in V(G)}|A(\phi, x)|, \quad D(G, \phi) \defeq \sum_{x \in V(G)}\deg(x)|E_G(x) \cap \mathsf{U_\phi}|, \\
    \Pot(G, \phi) &\defeq (A(G, \phi), D(G, \phi)).
\end{align*}
We consider the lexicographic ordering on $\Z^2$:
\[(z_1, z_2) < (w_1, w_2) \iff z_1 < w_1 \text{ or } z_1 = w_1, z_2 < w_2.\]
Let us now define our final class of chains.

\begin{defn}
    We say that a chain $C$ is \emphd{$\phi$-content} for a partial coloring $\phi$ if it is $\phi$-shiftable and $\Pot(G, \Shift(\phi, C)) < \Pot(G, \phi)$.
\end{defn}

Note that 
\begin{align}\label{eqn:pot_bdd}
    0 \leq A(G, \phi) \leq n\max_{x}|f(x, L)|, \quad 0 \leq D(G, \phi) \leq 2m^2 \leq \Delta^2n^2/2.
\end{align}
We will describe procedures that compute chains that are either $\phi$-happy or $\phi$-content for any partial coloring $\phi$.
It follows that after finding a finite number of content chains, we are guaranteed to find a happy one.
Furthermore, we will show that these procedures run in $\poly(\Delta, n)$ time. 
Assuming $\max_{x}|f(x, L)| \leq \poly(\Delta, n)$, it follows from \eqref{eqn:pot_bdd} that we can compute such colorings in $\poly(\Delta, n)$ time.

The first special type of chains we consider are path chains.

\begin{defn}[{\cite[Definition~2.2]{ShannonChain}}]\label{defn:path}
    A chain $P = (e_0, \ldots, e_{k-1})$ is a \emphd{path chain} if the edges $e_1$, \ldots, $e_{k-1}$ form a path in $G$, i.e., if there exist distinct vertices $x_1$, \ldots, $x_k$ such that $e_i \in E_G(x_i,x_{i+1})$ for all $1 \leq i \leq k-1$ (see Fig.~\ref{fig:path} for examples). We let $x_0 \in V(e_0)$ be distinct from $x_1$ and let $\vstart(P) \defeq x_0$, $\vend(P) \defeq x_k$ denote the first and last vertices on the path chain respectively. 
    Note that the vertex $x_0$ may coincide with $x_i$ for some $3 \leq i \leq k$; see Fig.~\ref{subfig:unsucc} for an example. 
    The vertices $\vstart(P)$ and $\vend(P)$ are uniquely determined unless $P$ is a single edge.
\end{defn}

\begin{figure}[t]
	\centering
        \begin{subfigure}[t]{\textwidth}
		\centering
		\begin{tikzpicture}
		\node[circle,fill=black,draw,inner sep=0pt,minimum size=4pt] (x) at (0,0) {};
            \node[circle,fill=black,draw,inner sep=0pt,minimum size=4pt] (y) at (1,0) {};
            \node[circle,fill=black,draw,inner sep=0pt,minimum size=4pt] (a) at (2,0) {};
            \node[circle,fill=black,draw,inner sep=0pt,minimum size=4pt] (b) at (3,0) {};
            \node[circle,fill=black,draw,inner sep=0pt,minimum size=4pt] (c) at (5,0) {};
            \node[circle,fill=black,draw,inner sep=0pt,minimum size=4pt] (d) at (6,0) {};
            \node[circle,fill=black,draw,inner sep=0pt,minimum size=4pt] (e) at (7,0) {};
            \node[circle,fill=black,draw,inner sep=0pt,minimum size=4pt] (f) at (-1,0) {};
            \node[circle,fill=black,draw,inner sep=0pt,minimum size=4pt] (g) at (-2,0) {};
            \node[circle,fill=black,draw,inner sep=0pt,minimum size=4pt] (h) at (-4,0) {};
            \node[circle,fill=black,draw,inner sep=0pt,minimum size=4pt] (i) at (-5,0) {};
            \node[circle,fill=black,draw,inner sep=0pt,minimum size=4pt] (j) at (-6,0) {};

		\draw[thick,dotted] (x) to node[midway,below,inner sep=1pt,outer sep=1pt,minimum size=4pt,fill=white] {$e$} (y);
		\draw[thick] (y) to node[midway,inner sep=1pt,outer sep=1pt,minimum size=4pt,fill=white] {$\alpha$} (a) to node[midway,inner sep=1pt,outer sep=1pt,minimum size=4pt,fill=white] {$\beta$} (b) (c) to node[midway,inner sep=1pt,outer sep=1pt,minimum size=4pt,fill=white] {$\beta$} (d) to node[midway,inner sep=1pt,outer sep=1pt,minimum size=4pt,fill=white] {$\alpha$} (e) 
        (x) to node[midway,inner sep=1pt,outer sep=1pt,minimum size=4pt,fill=white] {$\beta$} (f) to node[midway,inner sep=1pt,outer sep=1pt,minimum size=4pt,fill=white] {$\alpha$} (g) (h) to node[midway,inner sep=1pt,outer sep=1pt,minimum size=4pt,fill=white] {$\alpha$} (i) to node[midway,inner sep=1pt,outer sep=1pt,minimum size=4pt,fill=white] {$\beta$} (j);
        \draw[thick, snake=zigzag] (b) -- (c) (g) -- (h);

        \draw[decoration={brace,amplitude=10pt},decorate] (-6,0.2) -- node [midway,above,yshift=10pt,xshift=0pt] {$P(e; \phi, \beta\alpha)$} (1,0.2);
        \draw[decoration={brace,amplitude=10pt,mirror},decorate] (0,-0.2) -- node [midway,above,yshift=-28pt,xshift=0pt] {$P(e; \phi, \alpha\beta)$} (7,-0.2);
		
		\end{tikzpicture}
		\caption{$P(e; \phi, \alpha\beta)$ and $P(e; \phi, \beta\alpha)$.}\label{subfig:diff_paths}
	\end{subfigure}%
        \vspace{10pt}
	\begin{subfigure}[t]{.47\textwidth}
		\centering
		\begin{tikzpicture}
		\node[draw=none,minimum size=2.5cm,regular polygon,regular polygon sides=7] (P) {};

		\node[circle,fill=black,draw,inner sep=0pt,minimum size=4pt] (x) at (P.corner 4) {};
		\node[circle,fill=black,draw,inner sep=0pt,minimum size=4pt] (y) at (P.corner 5) {};
		\node[circle,fill=black,draw,inner sep=0pt,minimum size=4pt] (a) at (P.corner 6) {};
		\node[circle,fill=black,draw,inner sep=0pt,minimum size=4pt] (b) at (P.corner 7) {};
		\node[circle,fill=black,draw,inner sep=0pt,minimum size=4pt] (c) at (P.corner 1) {};
		\node[circle,fill=black,draw,inner sep=0pt,minimum size=4pt] (d) at (P.corner 2) {};
		\node[circle,fill=black,draw,inner sep=0pt,minimum size=4pt] (e) at (P.corner 3) {};
		
		\node[anchor=north] at (x) {$x$};
		\node[anchor=north] at (y) {$y$};
		
		\draw[thick,dotted] (x) to node[midway,below,inner sep=1pt,outer sep=1pt,minimum size=4pt,fill=white] {$e$} (y);
		\draw[thick] (y) to node[midway,inner sep=1pt,outer sep=1pt,minimum size=4pt,fill=white] {$\alpha$} (a);
		\draw[thick] (a) to node[midway,inner sep=1pt,outer sep=1pt,minimum size=4pt,fill=white] {$\beta$} (b);
		\draw[thick] (b) to node[midway,inner sep=1pt,outer sep=1pt,minimum size=4pt,fill=white] {$\alpha$} (c);
		\draw[thick] (c) to node[midway,inner sep=1pt,outer sep=1pt,minimum size=4pt,fill=white] {$\beta$} (d);
		\draw[thick] (d) to node[midway,inner sep=1pt,outer sep=1pt,minimum size=4pt,fill=white] {$\alpha$} (e);
		\draw[thick] (e) to node[midway,inner sep=1pt,outer sep=1pt,minimum size=4pt,fill=white] {$\beta$} (x);
		\end{tikzpicture}
		\caption{A path chain where $\vstart(P) = \vend(P)$.}\label{subfig:unsucc}
	\end{subfigure}%
	\qquad%
	\begin{subfigure}[t]{.47\textwidth}
		\centering
		\begin{tikzpicture}
		\node[draw=none,minimum size=2.5cm,regular polygon,regular polygon sides=7] (P) {};

		\node[circle,fill=black,draw,inner sep=0pt,minimum size=4pt] (x) at (P.corner 4) {};
		\node[circle,fill=black,draw,inner sep=0pt,minimum size=4pt] (y) at (P.corner 5) {};
		\node[circle,fill=black,draw,inner sep=0pt,minimum size=4pt] (a) at (P.corner 6) {};
		\node[circle,fill=black,draw,inner sep=0pt,minimum size=4pt] (b) at (P.corner 7) {};
		\node[circle,fill=black,draw,inner sep=0pt,minimum size=4pt] (c) at (P.corner 1) {};
		\node[circle,fill=black,draw,inner sep=0pt,minimum size=4pt] (d) at (P.corner 2) {};
		\node[circle,fill=black,draw,inner sep=0pt,minimum size=4pt] (e) at (P.corner 3) {};
		\node[circle,fill=black,draw,inner sep=0pt,minimum size=4pt] (f) at (-2.2,0) {}; 
		
		\node[anchor=north] at (x) {$x$};
		\node[anchor=north] at (y) {$y$};
		
		\draw[thick,dotted] (x) to node[midway,below,inner sep=1pt,outer sep=1pt,minimum size=4pt,fill=white] {$e$} (y);
		\draw[thick] (y) to node[midway,inner sep=1pt,outer sep=1pt,minimum size=4pt,fill=white] {$\alpha$} (a);
		\draw[thick] (a) to node[midway,inner sep=1pt,outer sep=1pt,minimum size=4pt,fill=white] {$\beta$} (b);
		\draw[thick] (b) to node[midway,inner sep=1pt,outer sep=1pt,minimum size=4pt,fill=white] {$\alpha$} (c);
		\draw[thick] (c) to node[midway,inner sep=1pt,outer sep=1pt,minimum size=4pt,fill=white] {$\beta$} (d);
		\draw[thick] (d) to node[midway,inner sep=1pt,outer sep=1pt,minimum size=4pt,fill=white] {$\alpha$} (e);
		\draw[thick] (e) to node[midway,inner sep=1pt,outer sep=1pt,minimum size=4pt,fill=white] {$\beta$} (f);
		\end{tikzpicture}
		\caption{A path chain where $\vstart(P) \neq \vend(P)$.}
	\end{subfigure}
	\caption{Path chains.}
        \label{fig:path}
\end{figure}
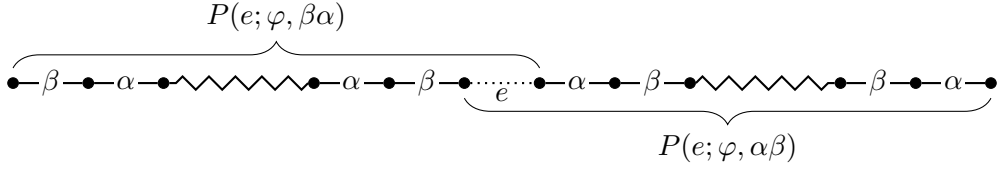
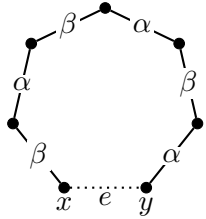
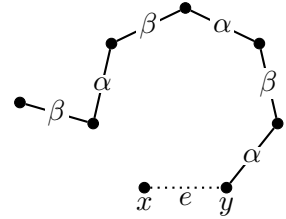

We are specifically interested in path chains containing at most $2$ colors, except at their first edge.
For $\alpha, \beta \in \N$, we say that a path chain $P = (e_0, \ldots, e_{k-1})$ is an \emphd{$\alpha\beta$-path} under $\phi$ if the sequence $(\phi(e_1), \ldots, \phi(e_{k-1}))$ alternates between $\alpha$ and $\beta$.
For a partial coloring $\phi$, an uncolored edge $e \in E_G(x, y)$ and colors $\alpha \in A(\phi, x),\, \beta \in A(\phi, y)$, we define the $\alpha\beta$-path chain  $P(e; \phi, \alpha\beta) \defeq (e_0 = e, e_1, \ldots, e_{k-1})$ to be a maximal $\alpha\beta$-path starting at $e$ such that $\phi(e_1) = \alpha$ (see Fig.~\ref{subfig:diff_paths}).

For a chain $C = (e_0, \ldots, e_{k-1})$ and $1 \leq j \leq k$, we define the \emphd{initial segment} of $C$ as
\[C|j \,\defeq\, (e_0, \ldots, e_{j-1}).\]
The following lemma will be crucial for the rest of the proof.

\begin{Lemma}\label{lemma:path}
    Let $\phi$ be a proper partial $L$-edge-coloring and let $e \in E_G(x, y)$ be an uncolored edge.
    Let $P \defeq P(e; \phi, \alpha, \beta)$ be such that:
    \begin{enumerate}
        \item\label{item:alpha} $\alpha \in A(\phi, x)$, 
        \item\label{item:beta} $\beta \in A(\phi, y)$, and
        \item\label{item:successful} $\vstart(P) \neq \vend(P)$.
    \end{enumerate}
    Then either $P$ is $\phi$-happy or some initial segment $P'$ of $P$ satisfies $A(G, \Shift(\phi, P')) < A(G, \phi)$.
\end{Lemma}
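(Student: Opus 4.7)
The plan is to show that whenever $P$ fails to be $\phi$-happy, $P$ itself is $\phi$-content, which immediately yields the dichotomy. I would first write $x_0, x_1, \ldots, x_k$ for the vertex sequence of $P$ and set $\gamma_i \defeq \phi(e_i)$ for $i \geq 1$, so $\gamma_i = \alpha$ for odd $i$ and $\gamma_i = \beta$ for even $i$. Since $\gamma_1 = \alpha \in A(\phi, x)$, the edge $e_1$ cannot be incident to $x$, forcing $x_0 = x$ and $x_1 = y$. For each $1 \leq i \leq k-1$ some chain edge at $x_i$ carries color $\alpha$, so the assumption $\alpha \in A(\phi, x)$ forbids $x_i = x$. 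Combined with the hypothesis $x_0 \neq x_k$, this gives that $x_0, x_1, \ldots, x_k$ are pairwise distinct, and a routine check shows $\psi \defeq \Shift(\phi, P)$ is then a proper partial coloring.

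Next, let $\gamma \defeq \gamma_{k-2}$, which by the alternation equals the color $\gamma_k$ one would need in order to extend $P$. The key step exploits the maximality of $P$, leaving two sub-cases: (i) $\gamma \notin U(\phi, x_k)$, or (ii) there is an edge $e' \in E_G(x_k, x_i)$ with $\phi(e') = \gamma$ for some $1 \leq i \leq k-1$. I anticipate ruling out (ii) to be the most delicate step. For $2 \leq i \leq k-1$, exactly one of the chain edges $e_{i-1}, e_i$ at $x_i$ carries color $\gamma$; by the distinctness of the vertex sequence, $V(e')$ coincides with neither $V(e_{i-1})$ nor $V(e_i)$, producing a second $\gamma$-colored edge at $x_i$ and contradicting properness of $\phi$. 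For $i = 1$, the two possibilities for $\gamma$ immediately contradict either $\beta \in A(\phi, y)$ (when $\gamma = \beta$) or properness at $y$ alongside $e_1$ (when $\gamma = \alpha$). So only case (i) survives, and in particular $\gamma \notin U(\psi, x_k)$.

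To close the argument, I would track how $\Shift$ modifies used colors: for $k \geq 3$, $U(\psi, x_{k-1}) = U(\phi, x_{k-1}) \setminus \{\gamma_{k-2}\}$. If $\gamma_{k-2} \in f(x_{k-1}, L)$ then $\gamma_{k-2} \in A(\psi, x_{k-1})$, and combining this with $\gamma_{k-2} = \gamma \notin U(\psi, x_k)$ exhibits $\gamma_{k-2}$ as a witness to $\psi$-happiness of $e_{k-1}$, so $P$ is $\phi$-happy. Otherwise $\gamma_{k-2} \notin f(x_{k-1}, L)$; summing the shift's contributions to $A$ across the vertices $x_0, x_1, x_{k-1}, x_k$ yields $A(G, \psi) - A(G, \phi) \leq -1$, so $\Pot(G, \psi) < \Pot(G, \phi)$ in the lexicographic order, making $P$ itself $\phi$-content. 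The case $k = 2$ reduces to $\beta \notin U(\phi, x_2)$ under (i), giving $\beta \in A(\phi, y) \setminus U(\psi, x_2) = A(\psi, x_1) \setminus U(\psi, x_2)$, so $P$ is always $\phi$-happy in that regime.
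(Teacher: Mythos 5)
There is a genuine gap in your argument. You assert that pairwise distinctness of $x_0, \ldots, x_k$ implies $\psi \defeq \Shift(\phi, P)$ is a ``proper partial coloring,'' but in this paper that phrase abbreviates ``proper partial $L$-edge-coloring,'' so you must also check $\psi(e_i) \in L(e_i)$ for every $i$. The shift swaps $\alpha \leftrightarrow \beta$ on each interior edge; $\phi(e_i) \in L(e_i)$ holds a priori, but nothing guarantees the \emph{other} of $\{\alpha,\beta\}$ belongs to $L(e_i)$ once $i \geq 2$. (For $e_0$ and $e_1$ the hypotheses \ref{item:alpha} and \ref{item:beta} do give $\alpha \in f(x,L) \subseteq L(e_0)$ and $\beta \in f(y,L) \subseteq L(e_1)$, but beyond $e_1$ neither hypothesis applies.) When the full shift violates a list constraint, $P$ is not $\phi$-shiftable, and is therefore by definition neither $\phi$-happy nor $\phi$-content; your planned dichotomy ``$P$ is $\phi$-happy or $P$ is $\phi$-content'' cannot hold for $P$ itself. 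This is precisely why the lemma's conclusion allows ``some initial segment of $P$.''

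The paper's proof handles this by taking $j$ to be the largest index for which $\Shift(\phi, P|j)$ is a proper $L$-edge-coloring and analyzing $P|j$ in place of $P$. When $j < \length(P)$, the maximality of $j$ forces the blocked color out of $L(\End(P|j))$ and hence out of $f(u,L)$ for both endpoints $u$ of $\End(P|j)$; this supplies, for free, the analogue of your hypothesis $\gamma_{k-2} \notin f(x_{k-1},L)$ and guarantees the decrease in $A(G,\cdot)$. Your closing bookkeeping on $\{x_0, x_1, x_{k-1}, x_k\}$, together with the dichotomy on whether the last swapped color lies in $f(\cdot,L)$ at the penultimate vertex, is the right endgame and mirrors the paper's treatment of $\End(P|j)$ --- it just needs to be run on the longest shiftable prefix rather than on $P$ itself.
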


\begin{proof}
    Note that $P$ is maximal, i.e., either $\alpha$ or $\beta$ is missing at $\vend(P)$ under $\phi$.
    
    First, we claim that if $\length(P) < 3$, then $P$ is $\phi$-happy.
    If $P = (e)$, $\alpha$ is missing at $y$. 
    By \ref{item:alpha}, it follows that $e$ is $\phi$-happy.
    Now suppose $P = (e, f)$ such that $\phi(f) = \alpha$. 
    Let $z \in V(f)$ be distinct from $y$.
    By the definition of a chain $z \neq x$.
    By \ref{item:alpha}, $P$ is $\phi$-shiftable.
    We have:
    \[A(\Shift(\phi, P), y) = A(\phi, y), \quad U(\Shift(\phi, P), z) = U(\phi, z) \setminus \set{\alpha}.\]
    As $P$ is maximal, we know that $\beta \notin U(\phi, z)$ and so $\beta \notin U(\Shift(\phi, P), z)$ as well.
    Therefore, $P$ is $\phi$-happy, as claimed.

    Let us now assume $\length(P) \geq 3$ and let $1 \leq j \leq \length(P)$ be the largest integer such that $\psi \defeq \Shift(\phi, P|j)$ is a proper $L$-edge-coloring.
    By \ref{item:alpha} and \ref{item:beta}, we have $j \geq 3$.
    Let $f \defeq \End(P|j)$ and $V(f) = \set{u, v}$ such that $v = \vend(P|j)$ (see Fig.~\ref{fig:shift_path}).
    Since $j \geq 3$, by Definition~\ref{defn:path} it follows that $y \notin V(f)$.
    We note that $\set{\alpha, \beta} \subseteq U(\phi, u)$.
    Furthermore, $\set{\alpha, \beta} \subseteq U(\phi, v)$ unless $v = \vend(P)$.
    It follows by \ref{item:alpha} and \ref{item:successful} that $x \notin V(f)$ as well.
    For each $z \in V(P|j) \setminus \set{u, v, x, y}$, we have $A(\psi, z) = A(\phi, z)$ as the only changes in their neighborhoods are that the edges colored $\alpha$ and $\beta$ swap values.
    Furthermore, as $j \geq 3$, we have
    \begin{align}\label{eqn:avail_sets}
        A(\psi, x) = A(\phi, x) \setminus\set{\alpha}, \quad A(\psi, y) = A(\phi, y) \setminus\set{\beta}.
    \end{align}
    Without loss of generality, let us assume $\phi(f) = \beta$.
    By construction, we have
    \[U(\psi, u) = U(\phi, u) \setminus \set{\alpha}, \quad U(\psi, v) = U(\phi, v) \setminus \set{\beta}.\]
    
    If $j = k$ and $\alpha \in L(f)$, then either $P$ is $\phi$-happy and we are done or $\alpha \notin A(\psi, u) \cup A(\psi, v)$.
    It follows that $A(\psi, u) = A(\phi, u)$ and $A(\psi, v)$ contains at most $1$ more color than $A(\phi, v)$, namely $\beta$.
    
    If $j < k$, by our choice of $j$, we must have
    \[\alpha \notin L(f) \implies \alpha \notin f(u, L) \cup f(v, L).\]
    In particular, $A(\psi, u) = A(\phi, u)$ and $A(\psi, v)$ contains at most $1$ more color than $A(\phi, v)$, namely $\beta$.
    
    Both of the above cases along with \eqref{eqn:avail_sets} imply that $A(G, \psi) \leq A(G, \phi) - 1$, as desired.
\end{proof}

\begin{figure}[t]
    \centering
        \begin{tikzpicture}
            \node[circle,fill=black,draw,inner sep=0pt,minimum size=4pt] (a) at (0,0) {};
            \path (a) ++(0:1) node[circle,fill=black,draw,inner sep=0pt,minimum size=4pt] (b) {};
            \path (b) ++(0:1) node[circle,fill=black,draw,inner sep=0pt,minimum size=4pt] (e) {};

            \path (e) ++(0:1) node[circle,fill=black,draw,inner sep=0pt,minimum size=4pt] (f) {};
            \path (f) ++(0:1) node[circle,fill=black,draw,inner sep=0pt,minimum size=4pt] (g) {};
            \path (g) ++(0:1) node[circle,fill=black,draw,inner sep=0pt,minimum size=4pt] (h) {};
            \path (h) ++(0:1) node[circle,fill=black,draw,inner sep=0pt,minimum size=4pt] (i) {};
            \path (i) ++(0:1) node[circle,fill=black,draw,inner sep=0pt,minimum size=4pt] (j) {};
            \path (j) ++(0:1) node[circle,fill=black,draw,inner sep=0pt,minimum size=4pt] (k) {};
            \path (k) ++(0:1) node[circle,fill=black,draw,inner sep=0pt,minimum size=4pt] (l) {};
            \path (l) ++(0:1) node[circle,fill=black,draw,inner sep=0pt,minimum size=4pt] (m) {};
            \path (m) ++(0:1) node[circle,fill=black,draw,inner sep=0pt,minimum size=4pt] (n) {};
            \path (n) ++(0:1) node[circle,fill=black,draw,inner sep=0pt,minimum size=4pt] (o) {};

            \node[anchor=north] at (k) {$u$};
            \node[anchor=north] at (l) {$v$};
            \node[anchor=north] at (a) {$x$};
            \node[anchor=north] at (b) {$y$};

            \draw[thick] (b) to node[font=\fontsize{8}{8},midway,inner sep=1pt,outer sep=1pt,minimum size=4pt,fill=white] {$\alpha$} (e) to node[font=\fontsize{8}{8},midway,inner sep=1pt,outer sep=1pt,minimum size=4pt,fill=white] {$\beta$} (f) to node[font=\fontsize{8}{8},midway,inner sep=1pt,outer sep=1pt,minimum size=4pt,fill=white] {$\alpha$} (g) to node[font=\fontsize{8}{8},midway,inner sep=1pt,outer sep=1pt,minimum size=4pt,fill=white] {$\beta$} (h) to node[font=\fontsize{8}{8},midway,inner sep=1pt,outer sep=1pt,minimum size=4pt,fill=white] {$\alpha$} (i) to node[font=\fontsize{8}{8},midway,inner sep=1pt,outer sep=1pt,minimum size=4pt,fill=white] {$\beta$} (j) to node[font=\fontsize{8}{8},midway,inner sep=1pt,outer sep=1pt,minimum size=4pt,fill=white] {$\alpha$} (k) to node[font=\fontsize{8}{8},midway,inner sep=1pt,outer sep=1pt,minimum size=4pt,fill=white] {$\beta$} (l) to node[font=\fontsize{8}{8},midway,inner sep=1pt,outer sep=1pt,minimum size=4pt,fill=white] {$\alpha$} (m) to node[font=\fontsize{8}{8},midway,inner sep=1pt,outer sep=1pt,minimum size=4pt,fill=white] {$\beta$} (n) to node[font=\fontsize{8}{8},midway,inner sep=1pt,outer sep=1pt,minimum size=4pt,fill=white] {$\alpha$} (o);

            \draw[thick, dotted] (a) -- (b);% k -- l

            \begin{scope}[yshift=-1.5cm]
                \draw[-{Stealth[length=3mm,width=2mm]},very thick,decoration = {snake,pre length=3pt,post length=7pt,},decorate] (6,1) -- (6,0);
            \end{scope}

            \begin{scope}[yshift=-2cm]
                \node[circle,fill=black,draw,inner sep=0pt,minimum size=4pt] (a) at (0,0) {};
                \path (a) ++(0:1) node[circle,fill=black,draw,inner sep=0pt,minimum size=4pt] (b) {};
                \path (b) ++(0:1) node[circle,fill=black,draw,inner sep=0pt,minimum size=4pt] (e) {};
    
                \path (e) ++(0:1) node[circle,fill=black,draw,inner sep=0pt,minimum size=4pt] (f) {};
                \path (f) ++(0:1) node[circle,fill=black,draw,inner sep=0pt,minimum size=4pt] (g) {};
                \path (g) ++(0:1) node[circle,fill=black,draw,inner sep=0pt,minimum size=4pt] (h) {};
                \path (h) ++(0:1) node[circle,fill=black,draw,inner sep=0pt,minimum size=4pt] (i) {};
                \path (i) ++(0:1) node[circle,fill=black,draw,inner sep=0pt,minimum size=4pt] (j) {};
                \path (j) ++(0:1) node[circle,fill=black,draw,inner sep=0pt,minimum size=4pt] (k) {};
                \path (k) ++(0:1) node[circle,fill=black,draw,inner sep=0pt,minimum size=4pt] (l) {};
                \path (l) ++(0:1) node[circle,fill=black,draw,inner sep=0pt,minimum size=4pt] (m) {};
                \path (m) ++(0:1) node[circle,fill=black,draw,inner sep=0pt,minimum size=4pt] (n) {};
                \path (n) ++(0:1) node[circle,fill=black,draw,inner sep=0pt,minimum size=4pt] (o) {};
    
                \node[anchor=north] at (k) {$u$};
                \node[anchor=north] at (l) {$v$};
                \node[anchor=north] at (a) {$x$};
                \node[anchor=north] at (b) {$y$};
    
                \draw[thick] (a) to node[font=\fontsize{8}{8},midway,inner sep=1pt,outer sep=1pt,minimum size=4pt,fill=white] {$\alpha$} (b) to node[font=\fontsize{8}{8},midway,inner sep=1pt,outer sep=1pt,minimum size=4pt,fill=white] {$\beta$} (e) to node[font=\fontsize{8}{8},midway,inner sep=1pt,outer sep=1pt,minimum size=4pt,fill=white] {$\alpha$} (f) to node[font=\fontsize{8}{8},midway,inner sep=1pt,outer sep=1pt,minimum size=4pt,fill=white] {$\beta$} (g) to node[font=\fontsize{8}{8},midway,inner sep=1pt,outer sep=1pt,minimum size=4pt,fill=white] {$\alpha$} (h) to node[font=\fontsize{8}{8},midway,inner sep=1pt,outer sep=1pt,minimum size=4pt,fill=white] {$\beta$} (i) to node[font=\fontsize{8}{8},midway,inner sep=1pt,outer sep=1pt,minimum size=4pt,fill=white] {$\alpha$} (j) to node[font=\fontsize{8}{8},midway,inner sep=1pt,outer sep=1pt,minimum size=4pt,fill=white] {$\beta$} (k) (l) to node[font=\fontsize{8}{8},midway,inner sep=1pt,outer sep=1pt,minimum size=4pt,fill=white] {$\alpha$} (m) to node[font=\fontsize{8}{8},midway,inner sep=1pt,outer sep=1pt,minimum size=4pt,fill=white] {$\beta$} (n) to node[font=\fontsize{8}{8},midway,inner sep=1pt,outer sep=1pt,minimum size=4pt,fill=white] {$\alpha$} (o);
    
                \draw[thick, dotted] (k) -- (l);
            \end{scope}
        \end{tikzpicture}    
    \caption{Shifting an initial segment of $P(e; \phi, \alpha\beta)$ for $e \in E_G(x, y)$.}
    \label{fig:shift_path}
\end{figure}
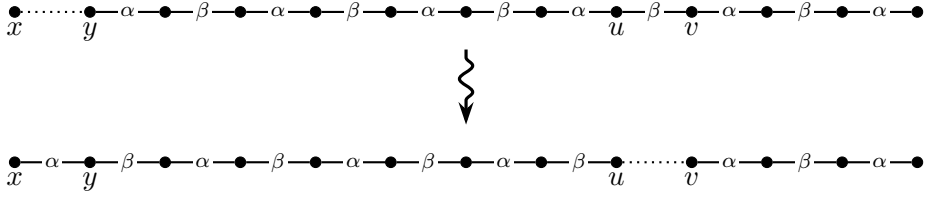

In order to prove Theorem~\ref{theo:main_theo}, we will describe a procedure to find a chain $C$ such that:
\begin{itemize}
    \item either $C$ is $\phi$-happy, or
    \item $C$ is $\phi$-content, or
    \item $A(G, \phi) = A(G, \Shift(\phi, C))$ and $\Shift(\phi, C)$ contains a path chain $P$ with $\Start(P) = \End(C)$ satisfying the conditions of Lemma~\ref{lemma:path}.
\end{itemize}
Let us show that this is enough.
We consider the following procedure starting with a blank coloring $\phi$:
\begin{enumerate}[label=\ep{\normalfont{}\textbf{Step\arabic*}},labelindent=0pt,leftmargin=*]
    \item Compute the chain $C$ described earlier.

    \item\label{item:happy} If $C$ is $\phi$-happy, then update $\phi$ by coloring $\End(C)$ with a valid color in $\Shift(\phi, C)$.

    \item\label{item:content} If $C$ is $\phi$-content, then update $\phi$ to $\Shift(\phi, C)$.

    \item\label{item:path_happy} If $\Shift(\phi, C)$ contains a $\Shift(\phi, C)$-happy path $P$, then update $\phi$ by coloring $\End(P)$ with a valid color in $\Shift(\Shift(\phi, C), P)$.

    \item\label{item:path_content} If $\Shift(\phi, C)$ contains a $\Shift(\phi, C)$-content path $P$, then update $\phi$ to $\Shift(\Shift(\phi, C), P)$.
    
\end{enumerate}
These steps cover all cases.
As a result of \eqref{eqn:pot_bdd}, we reach \ref{item:content} or \ref{item:path_content} a finite number of times before reaching \ref{item:happy} or \ref{item:path_happy}.
We note that updating the coloring given $C,\,P$ takes at most $O(\Delta(\length(C)+ \length(P)))$ time.
The chain $C$ we will compute will have length at most $\Delta$, and it is easy to see that $\length(P) \leq n$.
We remark that given $\phi,\,e,\,\alpha,\,\beta$, we may compute $P(e;\phi, \alpha\beta)$ in $O(\Delta\,n)$ time.
We will also show that, provided $\max_x|f(x, L)| \leq \poly(\Delta, n)$, computing $C$ takes $\poly(\Delta, n)$ time and so it follows that this procedure computes a proper $L$-edge-coloring in $\poly(\Delta, n)$ time.

From standard proofs of the chromatic index of bipartite multigraphs, one can show that given any bipartite multigraph $G$ and partial coloring $\phi$, we can find a path chain $P$ satisfying the conditions of Lemma~\ref{lemma:path} provided $|f(x, L)| \geq \deg(x)$ (see \cite[Proposition~5.3.1]{Diestel} for an outline of such a proof).
Therefore, Theorem~\ref{theo:main_theo}\eqref{theo:vizing_bipartite} follows from Lemma~\ref{lemma:path}.

The chains $C$ we will compute for the proofs of Theorem~\ref{theo:main_theo}\eqref{theo:shannon}-\eqref{theo:vizing} will be fan chains.

\begin{defn}[{\cite[Definition~2.5]{ShannonChain}}]\label{defn:fan}
    A \emphd{fan} is a chain of the form $F = (e_0, \ldots, e_{k-1})$ such that all edges contain a common vertex $x$, referred to as the \emphd{pivot} of the fan (see Fig.~\ref{fig:fan} for an example).
    With $y_i$ defined as the vertex in $V(e_i)$ distinct from $x$,
    we let $\Pivot(F) \defeq x$, $\vstart(F) \defeq y_0$ and $\vend(F) \defeq y_{k-1}$ denote the pivot, start, and end vertices of a fan $F$. 
    This notation is uniquely determined unless $F$ is a single edge, in which case we let $\vend(F) = \vstart(F)$.
    We note that it may be the case that $y_i = y_j$ for $i < j$ as $G$ is a multigraph.
\end{defn}

\begin{figure}[t]
	\centering
	\begin{tikzpicture}[xscale=1.1]
	\begin{scope}
	\node[circle,fill=black,draw,inner sep=0pt,minimum size=4pt] (x) at (0,0) {};
	\node[anchor=north] at (x) {$x$};
	
	\coordinate (O) at (0,0);
	\def\radius{2.6cm}
	
	\node[circle,fill=black,draw,inner sep=0pt,minimum size=4pt] (y0) at (190:\radius) {};
	\node at (190:2.9) {$y_0$};
	
	\node[circle,fill=black,draw,inner sep=0pt,minimum size=4pt] (y1) at (165:\radius) {};
	\node at (165:2.9) {$y_1$};
	
	\node[circle,fill=black,draw,inner sep=0pt,minimum size=4pt] (y2) at (140:\radius) {};
	\node at (140:2.9) {$y_2$};
	
	%\node[circle,fill=black,draw,inner sep=0pt,minimum size=4pt] (y3) at (115:\radius) {};
	\node[circle,fill=black,draw,inner sep=0pt,minimum size=4pt] (y4) at (90:\radius) {};
	\node at (90:2.9) {$y_{i-1}$};
	
	\node[circle,fill=black,draw,inner sep=0pt,minimum size=4pt] (y5) at (62:\radius) {};
	\node at (62:2.9) {$y_i = y_j$};
	
	\node[circle,fill=black,draw,inner sep=0pt,minimum size=4pt] (y6) at (30:\radius) {};
	\node at (30:3) {$y_{i+1}$};
	
	%\node[circle,fill=black,draw,inner sep=0pt,minimum size=4pt] (y7) at (15:\radius) {};
	\node[circle,fill=black,draw,inner sep=0pt,minimum size=4pt] (y8) at (-10:\radius) {};
	\node at (-10:3.1) {$y_{k-1}$};
	
	\node[circle,inner sep=0pt,minimum size=4pt] at (115:2) {$\ldots$}; 
	\node[circle,inner sep=0pt,minimum size=4pt] at (12:2) {$\ldots$}; 
	
	\draw[thick,dotted] (x) to (y0);
	\draw[thick] (x) to node[midway,inner sep=1pt,outer sep=1pt,minimum size=4pt,fill=white] {$\beta_0$} (y1);
	\draw[thick] (x) to node[midway,inner sep=1pt,outer sep=1pt,minimum size=4pt,fill=white] {$\beta_1$} (y2);
	
	\draw[thick] (x) to node[pos=0.55,inner sep=1pt,outer sep=1pt,minimum size=4pt,fill=white] {$\beta_{i-2}$} (y4);
	\draw[thick] (x) to[out=80, in=-140] node[pos=0.75,inner sep=1pt,outer sep=1pt,minimum size=4pt,fill=white] {$\beta_{i-1}$} (y5);
        \draw[thick] (x) to[out=40, in=-100] node[pos=0.6,inner sep=1pt,outer sep=1pt,minimum size=4pt,fill=white] {$\beta_{j-1}$} (y5);
	\draw[thick] (x) to node[midway,inner sep=1pt,outer sep=1pt,minimum size=4pt,fill=white] {$\beta_i$} (y6);
	
	\draw[thick] (x) to node[midway,inner sep=1pt,outer sep=1pt,minimum size=4pt,fill=white] {$\beta_{k-2}$} (y8);
	\end{scope}
	
	\draw[-{Stealth[length=1.6mm]},very thick,decoration = {snake,pre length=3pt,post length=7pt,},decorate] (2.9,1) to node[midway,anchor=south]{$\Shift$} (5,1);
	
	\begin{scope}[xshift=8.3cm]
	\node[circle,fill=black,draw,inner sep=0pt,minimum size=4pt] (x) at (0,0) {};
	\node[anchor=north] at (x) {$x$};
	
	\coordinate (O) at (0,0);
	\def\radius{2.6cm}
	
	\node[circle,fill=black,draw,inner sep=0pt,minimum size=4pt] (y0) at (190:\radius) {};
	\node at (190:2.9) {$y_0$};
	
	\node[circle,fill=black,draw,inner sep=0pt,minimum size=4pt] (y1) at (165:\radius) {};
	\node at (165:2.9) {$y_1$};
	
	\node[circle,fill=black,draw,inner sep=0pt,minimum size=4pt] (y2) at (140:\radius) {};
	\node at (140:2.9) {$y_2$};
	
	\node[circle,fill=black,draw,inner sep=0pt,minimum size=4pt] (y4) at (90:\radius) {};
	\node at (90:2.9) {$y_{i-1}$};
	
	\node[circle,fill=black,draw,inner sep=0pt,minimum size=4pt] (y5) at (62:\radius) {};
	\node at (62:2.9) {$y_i = y_j$};
	
	\node[circle,fill=black,draw,inner sep=0pt,minimum size=4pt] (y6) at (30:\radius) {};
	\node at (30:3) {$y_{i+1}$};
	
	\node[circle,fill=black,draw,inner sep=0pt,minimum size=4pt] (y8) at (-10:\radius) {};
	\node at (-10:3.1) {$y_{k-1}$};
	
	\node[circle,inner sep=0pt,minimum size=4pt] at (115:2) {$\ldots$}; 
	\node[circle,inner sep=0pt,minimum size=4pt] at (12:2) {$\ldots$}; 
	
	\draw[thick] (x) to node[midway,inner sep=1pt,outer sep=1pt,minimum size=4pt,fill=white] {$\beta_0$} (y0);
	\draw[thick] (x) to node[midway,inner sep=1pt,outer sep=1pt,minimum size=4pt,fill=white] {$\beta_1$} (y1);
	\draw[thick] (x) to node[midway,inner sep=1pt,outer sep=1pt,minimum size=4pt,fill=white] {$\beta_2$} (y2);
	
	\draw[thick] (x) to node[pos=0.55,inner sep=1pt,outer sep=1pt,minimum size=4pt,fill=white] {$\beta_{i-1}$} (y4);
	\draw[thick] (x) to[out=80, in=-140] node[pos=0.75,inner sep=1pt,outer sep=1pt,minimum size=4pt,fill=white] {$\beta_{i}$} (y5);
        \draw[thick] (x) to[out=40, in=-100] node[pos=0.6,inner sep=1pt,outer sep=1pt,minimum size=4pt,fill=white] {$\beta_{j}$} (y5);
	\draw[thick] (x) to node[midway,inner sep=1pt,outer sep=1pt,minimum size=4pt,fill=white] {$\beta_{i+1}$} (y6);
	
	\draw[thick, dotted] (x) to (y8);
	\end{scope}
	\end{tikzpicture}
	\caption{The process of shifting a fan.}\label{fig:fan}
\end{figure}
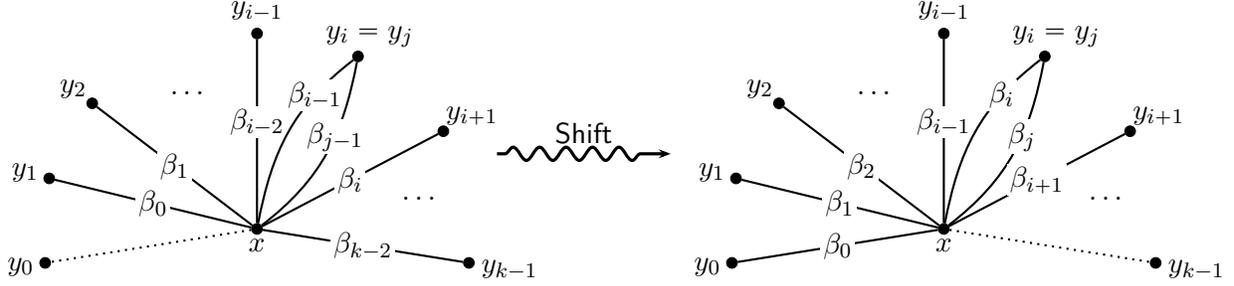

\section{Proof of Theorem~\ref{theo:main_theo}\eqref{theo:shannon}}\label{section:shannon}

In this section, we will assume that $|f(x, L)| \geq \Shav{x}$ for each $x \in V(G)$.
Furthermore, we assume that $|f(x, L)| \leq \ell$ for some $\ell \in \N$.
As mentioned earlier, we will describe a procedure to improve a partial coloring by computing a fan chain that satisfies certain properties.
To this end, we define the \hyperref[alg:shannon_chain]{Local Shannon Fan Algorithm} which takes as input a partial $L$-edge-coloring $\phi$ and outputs the desired fan chain.
Let us provide an informal overview of the algorithm (the full details are provided in Algorithm~\ref{alg:shannon_chain}).
Let $x, y$ be the vertices in $V(e)$ such that $\deg(x) \leq \deg(y)$.
We first check whether $A(\phi, y) \setminus U(\phi, x) \neq \0$, which would imply $e$ is $\phi$-happy.
We may now assume $A(\phi, y) \subseteq U(\phi, x)$.
Let $\eta \in A(\phi, y)$ and let $f$ be the unique edge incident to $x$ colored $\eta$.
We return the fan $F = (e, f)$.

Assuming $A(\cdot)$ and $U(\cdot)$ are stored as hash maps, the \textsf{if} statement takes $O(\Delta\,\ell)$ time and all other operations take $O(\Delta)$ time.
It follows that the running time of Algorithm~\ref{alg:shannon_chain} is $O(\Delta\,\ell)$.

\begin{algorithm}[h]\small
\caption{Local Shannon Fan Algorithm}\label{alg:shannon_chain}
\begin{flushleft}
\textbf{Input}: A proper partial $L$-edge-coloring $\phi$ and an uncolored edge $e \in E(G)$. \\
\textbf{Output}: A fan chain $F$.
\end{flushleft}
\begin{algorithmic}[1]
    \State Let $V(e) = \set{x, y}$ such that $\deg(x) \leq \deg(y)$.
    \If{$A(\phi, y) \setminus U(\phi, x) \neq \0$}
        \State \Return $(e)$ \label{step:happy_edge}
    \EndIf
    \State $\eta \gets \min A(\phi, y)$
    \State Let $f \in E_G(x)$ be such that $\phi(f) = \eta$.
    \State \Return $(e, f)$
\end{algorithmic}
\end{algorithm}

\begin{Lemma}\label{lemma:fan_shannon}
    Let $F$ be the output of Algorithm~\ref{alg:shannon_chain} on input $(\phi, e)$. Then, 
    \begin{enumerate}
        \item either $F$ is $\phi$-happy, or
        \item $F$ is $\phi$-content, or
        \item $A(G, \phi) = A(G, \Shift(\phi, F))$ and $\Shift(\phi, F)$ contains a path chain satisfying the conditions of Lemma~\ref{lemma:path}, or
        \item\label{item:e_succ} $\phi$ contains a path chain satisfying the conditions of Lemma~\ref{lemma:path}.
    \end{enumerate}
\end{Lemma}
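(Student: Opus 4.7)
The plan is to case-split on the algorithm's output. In the trivial case $F = (e)$, the \textsf{if}-test at line~2 succeeded, so some $\beta \in A(\phi, y) \setminus U(\phi, x)$ witnesses that $e$ is $\phi$-happy, giving conclusion~(1). The substantive case is $F = (e, f)$ with $\eta = \min A(\phi, y)$, $\phi(f) = \eta$, $V(f) = \set{x, z}$, and $A(\phi, y) \subseteq U(\phi, x)$. Setting $\psi \defeq \Shift(\phi, F)$ so that $\psi(e) = \eta$ and $\psi(f) = \blank$, one checks that $\psi$ is a proper partial $L$-edge-coloring using $\eta \in f(y, L) \subseteq L(e)$ and $\eta \notin U(\phi, y)$. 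A direct bookkeeping at each vertex shows that only $y$ and $z$ see a change in their available sets: $A(\psi, y) = A(\phi, y) \setminus \set{\eta}$, and $A(\psi, z) = A(\phi, z) \cup \set{\eta}$ when $\eta \in f(z, L)$ (otherwise $A(\psi, z) = A(\phi, z)$). Therefore $A(G, \psi) = A(G, \phi) - 1$ when $\eta \notin f(z, L)$, yielding conclusion~(2) ($F$ is $\phi$-content), whereas $A(G, \psi) = A(G, \phi)$ when $\eta \in f(z, L)$, in which case we must hunt for a path chain.

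In this latter regime, if $f$ is $\psi$-happy, then $F$ is $\phi$-happy by definition of a happy chain, giving~(1). Otherwise the two happiness failures simplify, using $\eta \notin A(\phi, x)$ and $\eta \in U(\phi, x)$, to the two containments $A(\phi, x) \subseteq U(\phi, z)$ and $A(\phi, z) \subseteq U(\phi, x)$. For each $\alpha \in A(\phi, x)$ I then consider $P_\alpha \defeq P(f; \psi, \alpha\eta)$, which is well-defined since $\alpha \in A(\psi, x)$ and $\eta \in A(\psi, z)$. The key observation is that $\vstart(P_\alpha) = x$, and if $\vend(P_\alpha) = x$ then the last edge of $P_\alpha$ must be the unique $\eta$-colored edge at $x$ in $\psi$ (namely $e$), so the second-to-last edge is an $\alpha$-colored edge at $y$, forcing $\alpha \in U(\phi, y)$. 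Consequently, if some $\alpha \in A(\phi, x) \setminus U(\phi, y)$ exists, then $\vstart(P_\alpha) \neq \vend(P_\alpha)$ and Lemma~\ref{lemma:path} applied to $P_\alpha$ in $\psi$ yields~(3).

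The final and most delicate subcase is $A(\phi, x) \subseteq U(\phi, y)$, on top of $A(\phi, y) \subseteq U(\phi, x)$, $A(\phi, x) \subseteq U(\phi, z)$, and $A(\phi, z) \subseteq U(\phi, x)$. Here I pivot back to $\phi$ and examine $Q_\alpha \defeq P(e; \phi, \alpha\eta)$ for $\alpha \in A(\phi, x)$; this is well-defined since $\alpha \in A(\phi, x)$, $\eta \in A(\phi, y)$, and $\alpha \in U(\phi, y)$ forces an extension beyond $e$. A parallel argument shows $\vend(Q_\alpha) = x$ iff $Q_\alpha$ terminates at the unique $\eta$-edge $f$ at $x$, i.e., iff $x$ and $y$ lie in the same $\alpha\eta$-component of $\phi$. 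If this fails for some $\alpha$, Lemma~\ref{lemma:path} applied to $Q_\alpha$ in $\phi$ gives~(4). Otherwise every $\alpha \in A(\phi, x)$ places $x$ and $y$ in a common $\alpha\eta$-component, and I would extend the analysis to the larger family $R_{\alpha, \beta} \defeq P(e; \phi, \alpha\beta)$ with $\beta \in A(\phi, y) \setminus \set{\eta}$; such $\beta$ exist because the containments force $\deg(y) \geq 3$, whence the Shannon bound gives $|A(\phi, y)| \geq \lfloor \deg(y)/2 \rfloor + 1 \geq 2$. The main obstacle is closing this residual configuration: either exhibiting some pair $(\alpha, \beta)$ that places $x$ and $y$ in distinct $\alpha\beta$-components of $\phi$ (giving~(4)), or deriving a counting contradiction from the four subset inclusions above together with the Shannon lower bounds $|A(\phi, x)| \geq \lfloor \deg(x)/2 \rfloor + 1$, $|A(\phi, y)| \geq \lfloor \deg(y)/2 \rfloor + 1$, and $|A(\phi, z)| \geq \lfloor \deg(z)/2 \rfloor$.
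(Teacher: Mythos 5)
There is a genuine gap in the final case, and you never invoke the second coordinate $D(G,\cdot)$ of the potential, which the paper's argument depends on crucially.

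The paper's proof splits the case $\eta\in f(z,L)$ further by comparing $\deg(z)$ with $\deg(y)$. When $\deg(z)<\deg(y)$, the shift has $A(G,\psi)=A(G,\phi)$ but strictly decreases $D(G,\psi)$ (the colored edge moves from a higher\-/degree to a lower\-/degree endpoint), so $F$ is $\phi$\-/content. Your proof omits this subcase entirely; since the lexicographic potential's $A$\-/component is tied, you would need the $D$\-/component to conclude anything, and you never look at it. This omission is not cosmetic: it is precisely the hypothesis $\deg(z)\geq\deg(y)$ that lets the paper's Claim~\ref{claim:nonempty_intersection} produce a color $\beta\in A(\phi,y)\cap A(\phi,z)$. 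Without that hypothesis the inequality chain in the claim's proof gives only $|A(\phi,y)\cap A(\phi,z)|\geq\lfloor\deg(y)/2\rfloor+\lfloor\deg(z)/2\rfloor-\deg(x)+2$, which can be nonpositive when $z$ has small degree, so no shared color is guaranteed.

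The second, structural issue is your choice of the alternating color. The paper uses $\beta\in A(\phi,y)\cap A(\phi,z)$, so $\beta$ is simultaneously missing at $y$ under $\phi$ and at $z$ under both $\phi$ and $\psi$ (since $\beta\neq\eta$). This symmetry is what makes the pair of paths $P(e;\phi,\alpha\beta)$ and $P(f;\psi,\alpha\beta)$ ``companion'' paths in the sense of the argument cited from \cite[Lemma~2.11]{ShannonChain}: if both had $\vstart=\vend$ one derives a contradiction. You instead work with $\eta$, which lies in $A(\phi,y)$ and in $A(\psi,z)$ but emphatically not in $A(\phi,z)$ (since $\phi(f)=\eta$ and $f\ni z$). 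That asymmetry is exactly why your $P_\alpha=P(f;\psi,\alpha\eta)$ and $Q_\alpha=P(e;\phi,\alpha\eta)$ families do not interlock, and you are left with the residual configuration you describe and cannot close. The fix is not to push harder on $\eta$ or on the family $R_{\alpha,\beta}$ with $\beta\in A(\phi,y)\setminus\{\eta\}$: you should instead (i) dispose of $\deg(z)<\deg(y)$ via the $D$\-/potential, (ii) invoke Claim~\ref{claim:nonempty_intersection} to obtain $\beta\in A(\phi,y)\cap A(\phi,z)$, and (iii) run the standard Kempe\-/companion argument with $\alpha\in A(\phi,x)$ and this shared $\beta$.

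Minor remarks: your bookkeeping of $A(\psi,\cdot)$ is correct, your identification of ``$f$ is $\psi$\-/happy'' with ``$A(\phi,x)\subseteq U(\phi,z)$ and $A(\phi,z)\subseteq U(\phi,x)$'' is correct (the latter is the negation of the paper's Case~1, plus an extra condition the paper does not need), and your observation that $\vend(P_\alpha)=x$ forces $\alpha\in U(\phi,y)$ is correct. These are all sound; the proof simply does not terminate.
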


\begin{proof}\stepcounter{ForClaims}\renewcommand{\theForClaims}{\ref{lemma:fan_shannon}}
    If $F = (e)$, we must be at step~\ref{step:happy_edge} and so $F$ is $\phi$-happy.
    Suppose $F = (e, f)$ and let $x,\,y,\,\eta$ be as defined in Algorithm~\ref{alg:shannon_chain} and let $z$ be the vertex in $V(f) \setminus \set{x}$.
    Note that we have $A(\phi, y) \subseteq U(\phi, x)$.
    Let $\psi \defeq \Shift(\phi, F)$.
    We will consider each case separately.
    \begin{enumerate}[label=\ep{\normalfont{}\textbf{Case\arabic*}},labelindent=0pt,wide]
        \item $A(\phi, z) \setminus U(\phi, x) \neq \0$. We claim that $F$ is $\phi$-happy.
        Note the following:
        \[U(\psi, x) = U(\phi, x), \quad A(\phi, z) \subseteq A(\psi, z).\]
        In particular, we have $A(\psi, z) \setminus U(\psi, x) \neq \0$, as claimed.

        \item $A(\phi, z) \subseteq U(\phi, x)$ and $\eta \notin f(z, L)$. In this case, we have
        \[A(\psi, x) = A(\phi, x), \quad A(\psi, y) = A(\phi, y) \setminus \set{\eta}, \quad A(\psi, z) = A(\phi, z).\]
        In particular, $A(G, \psi) < A(G, \phi)$ and so $F$ is $\phi$-content.

        \item\label{case:2} $A(\phi, z) \subseteq U(\phi, x)$, $\eta \in f(z, L)$ and $\deg(z) < \deg(y)$.
        In this case, we have
        \[A(\psi, x) = A(\phi, x), \quad A(\psi, y) = A(\phi, y) \setminus \set{\eta}, \quad A(\psi, z) = A(\phi, z) \cup \set{\eta},\]
        i.e., $A(G, \psi) = A(G, \phi)$. However, we have
        \[D(G, \psi) - D(G, \phi) = \deg(z) - \deg(y) < 0,\]
        and so $F$ is $\phi$-content.

        \item $A(\phi, y) \cup A(\phi, z) \subseteq U(\phi, x)$, $\eta \in f(z, L)$ and $\deg(z) \geq \deg(y)$.
        We note that $A(G, \psi) = A(G, \phi)$ by an identical argument to \ref{case:2}.
        It is now enough to find a path $P$ under the coloring $\phi$ or $\psi$ that satisfies the conditions of Lemma~\ref{lemma:path}.
        The following claim will assist with the proof of this case.
        \begin{claim}\label{claim:nonempty_intersection}
            Suppose $A(\phi, y) \cup A(\phi, z) \subseteq U(\phi, x)$ and $\deg(z) \geq \deg(y)$.
            Then, $A(\phi, y) \cap A(\phi, z) \neq \0$.
        \end{claim}
    
        \begin{claimproof}
            Since $x$ and $y$ are adjacent to uncolored edges, we have
            \[|U(\phi, x)| \leq \deg(x) - 1, \quad |A(\phi, y)| \geq \Shav{y} - (\deg(y) - 1) = \lfloor\deg(y)/2\rfloor + 1.\] 
            Similarly, we have
            \[|A(\phi, z)| \geq \Shav{z} - \deg(z) = \lfloor\deg(z)/2\rfloor.\]
            As $A(\phi, y) \cup A(\phi, z) \subseteq U(\phi, x)$, we can conclude that
            \begin{align*}
                |U(\phi, x)| &\geq |A(\phi, y) \cup A(\phi, z)| \\
                ~\implies \deg(x) - 1 &\geq |A(\phi, y)| + |A(\phi, z)| - |A(\phi, y) \cap A(\phi, z)|  \\
                ~\implies |A(\phi, y) \cap A(\phi, z)| &\geq \lfloor\deg(y)/2\rfloor + \lfloor\deg(z)/2\rfloor - \deg(x) + 2.
            \end{align*}
            Note that we define $y$ such that $\deg(x) \leq \deg(y)$. 
            As $\deg(z) \geq \deg(y) \geq \deg(x)$, we may conclude that
            \begin{align*}
                |A(\phi, y) \cap A(\phi, z)| &\geq \lfloor\deg(x)/2\rfloor + \lfloor\deg(x)/2\rfloor - \deg(x) + 2 \\
                &\geq \deg(x) - 1 - \deg(x) + 2 = 1,
            \end{align*}
            as desired.
        \end{claimproof}
        Let $\beta \in A(\phi, y) \cap A(\phi, z)$ and $\alpha \in A(\phi, x)$ be arbitrary.
        An identical argument to the one in \cite[Lemma~2.11]{ShannonChain} shows that for $P \defeq P(e; \phi, \alpha\beta)$ and $P' \defeq P(f; \psi, \alpha\beta)$, either $\vend(P) \neq \vstart(P)$ or $\vend(P') \neq \vstart(P')$.
        In particular, one of the paths satisfies the conditions of Lemma~\ref{lemma:path} under the corresponding coloring.
        This completes the proof. \qedhere
    \end{enumerate}
\end{proof}
 In the terminology of the coloring procedure outlined in \S\ref{section:prelim}, the chain $C$ is either the fan $F$ output by Algorithm~\ref{alg:shannon_chain}, or the single edge $(e)$ in case we are in the situation of \ref{item:e_succ}.

\section{Proof of Theorem~\ref{theo:main_theo}\eqref{theo:vizing}}\label{section:vizing}

In this section, we will assume that $|f(x, L)| \geq \deg(x) + \mu(x)$ for each $x \in V(G)$.
We will also once again assume that $|f(x, L)| \leq \ell$ for some $\ell \in \N$.
As mentioned earlier, we will describe a procedure to improve a partial coloring by computing a fan chain that satisfies certain properties.
To this end, we define the \hyperref[alg:vizing_chain]{Local Vizing Fan Algorithm} which takes as input a partial $L$-edge-coloring $\phi$ and whose output helps compute the desired fan chain.
The algorithm is identical to the one in \cite[Algorithm~A.1]{ShannonChain} with minor edits to account for locality.
Let us provide an informal overview of the algorithm (the full details are provided in Algorithm~\ref{alg:vizing_chain}).
The algorithm takes as input a proper partial coloring $\phi$, an uncolored edge $e \in E(G)$, and a choice of a pivot vertex $x \in V(e)$.
The output is a tuple $(F, \beta, j)$ such that:
\begin{itemize}
    \item $F$ is a fan with $\Start(F) = e$ and $\Pivot(F) = x$,
    \item $\beta \in \N$ is a color and $j$ is an index such that $\beta \in A(\phi, \vend(F))\cap A(\phi, \vend(F|j))$.
\end{itemize}
We first assign $\beta(z) = A(\phi, z)$ to each $z \in N_G(x)$. 
To construct $F$, we follow a series of iterations. 
At the start of each iteration, we have a fan $F = (e_0, \ldots, e_k)$ where $e_0 = e$ and $x \in V(e_i)$ for all $i$. 
Let $y_i \in V(e_i)$ be distinct from $x$ (the $y_i$'s are not necessarily distinct).
If $\min \beta(y_k) \notin U(\phi, x)$, then $F$ is $\phi$-happy and we return $(F, \min \beta(y_k), k+1)$.
If not, let $f \in E_G(x)$ be the unique edge adjacent to $x$ such that $\phi(f) = \min \beta(y_k)$. 
We now have two cases.
\begin{enumerate}[label=\ep{\emph{Case \arabic*}},labelindent=5pt,leftmargin=*]
    \item $f \notin \set{e_0, \ldots, e_k}$. Then we update $F$ to $(e_0, \ldots, e_k, f)$, remove $\min \beta(y_k)$ from $\beta(y_k)$ and continue.
    \item $f = e_j$ for some $0 \leq j \leq k$. Note that $\phi(e_0) = \blank$ and $\min \beta(y_k) \in A(\phi, y_k)$, so we must have $1 \leq j < k$. 
    In this case, we return $(F, \min \beta(y_k), j)$.
\end{enumerate}
We remark that $|A(\phi, z)| \geq \mu(z)$ and so $\beta(z)$ is never empty when defining $\eta$ at step~\ref{step:beta_empty}.
Furthermore, a similar argument as that of Algorithm~\ref{alg:shannon_chain} shows that the running time of Algorithm~\ref{alg:vizing_chain} is $O(\Delta^2\,\ell)$.

\begin{algorithm}[h]\small
\caption{Local Vizing Fan Algorithm}\label{alg:vizing_chain}
\begin{flushleft}
\textbf{Input}: A proper partial $L$-edge-coloring $\phi$, an uncolored edge $e\in E(G)$, and a vertex $x \in V(e)$. \\
\textbf{Output}: A fan $F$ with $\Start(F) = e$ and $\Pivot(F) = x$, a color $\beta \in \N$, and an index $j$ such that $\beta \in A(\phi, \vend(F))\cap A(\phi, \vend(F|j))$.
\end{flushleft}

\begin{algorithmic}[1]
    \State Let $y \in V(e)$ be distinct from $x$.
    \State $\mathsf{nbr}(\eta) \gets \blank$ \textbf{for each} $\eta \in U(\phi, x)$, \quad $\beta(z) \gets A(\phi, z)$ \textbf{for each} $z \in N_G(x)$
    \For{$f\in E_G(x)$}
        \State $\mathsf{nbr}(\phi(f)) \gets f$
    \EndFor
    \medskip
    \State $\mathsf{index}(f) \gets \blank$ \textbf{for each} $f \in E_G(x)$
    \State $F \gets (e)$, \quad $k \gets 0$, \quad $y_k \gets y$, \quad $\mathsf{index}(e) \gets k$
    \While{$k < \deg_G(x)$}
        \State $\eta \gets \min\beta(y_k)$, \quad $\mathsf{remove}(\beta(y_k), \eta)$ \label{step:beta_empty}
        \If{$\eta \notin U(\phi, x)$}
            \State \Return $(F, \eta, k + 1)$ \label{step:happy_fan_vizing}
        \EndIf
        \State $k \gets k + 1$
        \State $e_k \gets \mathsf{nbr}(\eta)$
        \If{$\mathsf{index}(e_k) \neq \blank$}
            \State \Return $(F, \eta, \mathsf{index}(e_k))$
        \EndIf
        \State $\mathsf{index}(e_k) \gets k$, \quad $y_k \in V(e_k)$ distinct from $x$
        \State $\mathsf{append}(F, e_k)$
    \EndWhile
\end{algorithmic}
\end{algorithm}

\begin{Lemma}\label{lemma:fan_vizing}
    Let $(F, \beta, j)$ be the output of Algorithm~\ref{alg:vizing_chain} on input $(\phi, e, x)$. Then for $F' \defeq F|j$, 
    \begin{enumerate}
        \item either $F$ is $\phi$-happy, or
        \item $F$ or $F'$ is $\phi$-content, or
        \item $A(G, \phi) = A(G, \Shift(\phi, F))$ and $\Shift(\phi, F)$ contains a path chain satisfying the conditions of Lemma~\ref{lemma:path}, or
        \item $A(G, \phi) = A(G, \Shift(\phi, F'))$ and $\Shift(\phi, F')$ contains a path chain satisfying the conditions of Lemma~\ref{lemma:path}.
    \end{enumerate}
\end{Lemma}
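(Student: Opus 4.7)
The plan is to mirror the non-local analysis of \cite[Lemma~2.11]{ShannonChain}, carefully tracking the effect of locality throughout. First I would verify that both $F$ and $F' \defeq F|j$ are $\phi$-shiftable: by construction each $\phi(e_{i+1})$ was drawn from $\beta(y_i) \subseteq A(\phi, y_i) \subseteq f(y_i, L) \subseteq L(e_i)$, so the shift assigns $e_i$ a color in its list, and the new color on $e_i$ was already missing at $y_i$ under $\phi$. Distinct fan edges at a common vertex $y_i$ receive distinct shifted colors (since the colors on fan edges at $x$ are pairwise distinct), and at $x$ the shift merely permutes colors among the fan edges, so properness is preserved.

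For the happy-return branch (line~\ref{step:happy_fan_vizing}), the returned color $\beta = \min\beta(y_K)$ satisfies $\beta \in A(\phi, \vend(F))$ and $\beta \notin U(\phi, x)$. Writing $\psi = \Shift(\phi, F)$, shifting only removes $\phi(\End(F))$ from $U(\phi, x)$, so $\beta \notin U(\psi, x)$; since $\beta \in f(\vend(F), L)$ and uncoloring $\End(F)$ only enlarges the set of missing colors at $\vend(F)$, we also have $\beta \in A(\psi, \vend(F))$. Hence $\End(F)$ is $\psi$-happy and $F$ is $\phi$-happy.

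For the mischief-return branch, I would record the identities $\beta = \phi(e_j)$ and $\beta \in A(\phi, \vend(F)) \cap A(\phi, \vend(F'))$; the second inclusion holds because $\beta = \min\beta(y_{j-1})$ was the color chosen when $e_j$ was appended. A short check excludes $j = K$: otherwise $\beta = \phi(e_K) \in U(\phi, y_K)$ would contradict $\beta \in A(\phi, y_K)$. Setting $\psi = \Shift(\phi, F)$, $\psi' = \Shift(\phi, F')$ and picking any $\alpha \in A(\phi, x)$ (nonempty since $|A(\phi, x)| \geq |f(x, L)| - \deg(x) + 1 \geq \mu(x) + 1$), I would consider the candidate paths $P = P(\End(F); \psi, \alpha\beta)$ and $P' = P(\End(F'); \psi', \alpha\beta)$.

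The main obstacle is to verify that one of the four listed conclusions must hold in every situation. Locality provides content fans precisely when some shifted color fails to lie in $f(\cdot, L)$ at a relevant endpoint, which strictly decreases $A(G, \cdot)$ exactly by the bookkeeping used in the proof of Lemma~\ref{lemma:path}. In the remaining case $A(G, \psi) = A(G, \psi') = A(G, \phi)$, and I expect to show that at least one of $P, P'$ satisfies $\vstart \neq \vend$. Indeed, if both ended at $x$ then each would terminate at the unique $\beta$-edge at $x$ in its coloring, namely $e_{j-1}$ under $\psi$ (where the shift moved $\beta = \phi(e_j)$ onto $e_{j-1}$) and $e_j$ under $\psi'$ (where $e_j$ is untouched by the shift). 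Outside the fan, the $\alpha\beta$-subgraph coincides in $\phi, \psi, \psi'$, and the two distinct $\beta$-edges $e_{j-1} \neq e_j$ at $x$ force the $\alpha\beta$-component of $x$ in $\phi$ to reach both $y_K$ and $y_{j-1}$ as path endpoints, a contradiction as in the classical Vizing fan analysis. Applying Lemma~\ref{lemma:path} to the successful path then delivers the remaining conclusion.
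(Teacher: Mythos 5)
Your proposal follows the same broad strategy as the paper: handle the happy return separately, then appeal to the structural argument of \cite[Lemma~A.1]{ShannonChain} (equivalently, the classical Vizing-fan case analysis) to produce one of $P(\End(F);\psi,\alpha\beta)$, $P(\End(F');\psi',\alpha\beta)$ satisfying $\vstart\neq\vend$, and feed that path into Lemma~\ref{lemma:path}. Your preliminary observations (shiftability of $F$ and $F'$, $j\neq K$, $\beta\in A(\phi,\vend(F))\cap A(\phi,\vend(F'))$, $A(\phi,x)\neq\0$) are all correct, and your sketch of the $\vstart\neq\vend$ dichotomy is in the spirit of the cited lemma.

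There is, however, a genuine gap. You assert ``Locality provides content fans precisely when some shifted color fails to lie in $f(\cdot,L)$ at a relevant endpoint \ldots In the remaining case $A(G,\psi)=A(G,\psi')=A(G,\phi)$,'' but nothing you wrote rules out $A(G,\psi)>A(G,\phi)$, which would land you in none of the four listed conclusions. Unlike the path case handled in Lemma~\ref{lemma:path}, shifting a fan can \emph{raise} availability at $\vend(F)$: the edge $\End(F)$ becomes blank, so $\phi(\End(F))$ may re-enter $A(\cdot,\vend(F))$. The inequality $A(G,\psi)\le A(G,\phi)$ is therefore not automatic, and establishing it is the main new content of the lemma. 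The paper proves it by, for each vertex $z\neq x$ on the fan, introducing $I(F,z)=\{i : z\in V(e_i)\}$, observing that $\{\phi(e_{i+1}):i\in I(F,z)\}$ (the colors $z$ certainly loses, all drawn from $A(\phi,z)$) is disjoint from $\{\phi(e_i):i\in I(F,z)\}$ (the colors $z$ can at best gain), and doing a four-way case analysis on whether $0$ and/or $k-1$ lie in $I(F,z)$. The $+1$ contributed by $\vend(F)$ is offset by a guaranteed $-1$ at $y_0$ (where $e_0$ goes from blank to colored), and every other vertex contributes $\le 0$. Your characterization of when the decrease is strict is also imprecise: even if every shifted color lies in every relevant $f(\cdot,L)$, the count can still drop, e.g.\ when $y_0=\vend(F)$ so that the $-1$ and $+1$ land on the same vertex but the sets fail to realign. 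To complete your proof you would need to supply this accounting (or an equivalent argument) before you can invoke the equality $A(G,\psi)=A(G,\phi)$ in the residual case.
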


\begin{proof}
    If we reach step~\ref{step:happy_fan_vizing}, then $F$ is $\phi$-happy.
    Let $\psi \defeq \Shift(\phi, F)$, $\psi' \defeq \Shift(\phi, F')$ and $\alpha \in A(\phi, x)$ be arbitrary.
    shows that for $P \defeq P(\End(F); \psi, \alpha\beta)$ and $P' \defeq P(\End(F'); \psi', \alpha\beta)$, either $\vend(P) \neq \vstart(P)$ or $\vend(P') \neq \vstart(P')$.
    In particular, one of the paths satisfies the conditions of Lemma \ref{lemma:path}.
    It remains to show $A(G, \psi) \leq A(G, \phi)$ and $A(G, \psi') \leq A(G, \phi)$.
    We will just consider the former as the latter follows identically, \textit{mutatis mutandis}.

    It is easy to see that $A(\psi, x) = A(\phi, x)$. 
    Let $F = (e_0, \ldots, e_{k-1})$ and $z$ be a vertex in $F$ other than $x$.
    Consider the set $I(F, z) \defeq \set{0 \leq i < k\,:\, z \in V(e_i)}$.
    We have the following:
    \[A(\psi, z) \subseteq A(\phi, z) \setminus \set{\phi(e_{i+1})\,:\, i \in I(F, z) \setminus \set{k-1}} \cup \set{\phi(e_{i})\,:\, i \in I(F, z)\setminus \set{0}}.\]
    As $\phi(e_{i+1})$ was chosen to be in $A(\phi, z)$, we have
    \[\set{\phi(e_{i+1})\,:\, i \in I(F, z)} \cap \set{\phi(e_{i})\,:\, i \in I(F, z)} = \0,\]
    and
    \[|A(\psi, z)| \leq |A(\phi, z)| - |\set{\phi(e_{i+1})\,:\, i \in I(F, z)}| + |\set{\phi(e_{i})\,:\, i \in I(F, z)}|.\]
    If $\set{0, k-1} \cap I(F, z) = \0$, we have
    \[|\set{\phi(e_{i+1})\,:\, i \in I(F, z)}| = |\set{\phi(e_{i})\,:\, i \in I(F, z)}|.\]
    From here, we may conclude that 
    \[|A(\psi, z)| \leq |A(\phi, z)|, \quad \text{ if } \set{0, k-1} \cap I(F, z) = \0.\]
    Let $0 \in I(F, z)$ and $k - 1 \notin I(F, z)$.
    In this case, we have $|A(\psi, z)| \leq |A(\phi, z)| - 1$ as
    \[|\set{\phi(e_{i+1})\,:\, i \in I(F, z)}| = |\set{\phi(e_{i})\,:\, i \in I(F, z)}| + 1,\]
    since $\phi(e_0) = \blank$.
    Let $k-1\in I(F, z)$ and $0 \notin I(F, z)$.
    In this case, we have $|A(\psi, z)| \leq |A(\phi, z)| + 1$ as
    \[|\set{\phi(e_{i+1})\,:\, i \in I(F, z)}| = |\set{\phi(e_{i})\,:\, i \in I(F, z)}| - 1,\]
    since $e_k$ does not exist.
    Finally, in the case that $\set{0, k-1} \subseteq I(F, z)$ we have $|A(\psi, z)| \leq |A(\phi, z)|$ as
    \[|\set{\phi(e_{i+1})\,:\, i \in I(F, z)}| = |\set{\phi(e_{i})\,:\, i \in I(F, z)}|,\]
    since $\phi(e_0) = \blank$ and $e_k$ does not exist.
    Putting all cases together, we conclude that $A(G, \psi) \leq A(G, \phi)$, as desired.
\end{proof}
In the terminology of the coloring procedure outlined in \S\ref{section:prelim}, the chain $C$ is either the fan $F$ or $F|j$, where $(F, \beta, j)$ is the output of Algorithm~\ref{alg:vizing_chain}.

\section*{Acknowledgements}

We thank Luke Postle and Michelle Delcourt for helpful comments.
We are also grateful to the anonymous referees for carefully reading the manuscript and providing helpful suggestions.

\vspace{0.2in}
\printbibliography

\end{document}